\def\nref#1{{\rm (\ref{#1})}}
\def\cvd{~\vbox{\hrule\hbox{%
   \vrule height1.3ex\hskip1.3ex\vrule}\hrule } }
\theoremstyle{plain}
\newtheorem{theorem}{Theorem}
\newtheorem{definition}{Definition}
\newtheorem{remark}{Remark} 
\newtheorem{lemma}{Lemma} 
\newcommand{\HDD}{\ensuremath{\mathbf{H}_{\text{DD}}}}
\newcommand{\rhs}{\ensuremath{\mathbf{b}}}
\newcommand{\Minv}{{\ensuremath{{\mathbf{M}}^{-1}}}}
\newcommand{\matid}{\ensuremath{\mathbf{{I}}}}
\newcommand{\bu}{\ensuremath{\mathbf{u}}}
\newcommand{\by}{\ensuremath{\mathbf{y}}}
\newcommand{\bR}{\ensuremath{\mathbf{R}}}
\newcommand{\bP}{\ensuremath{\mathbf{P}}}
\newcommand{\bD}{\ensuremath{\mathbf{D}}}
\newcommand{\bM}{\ensuremath{\mathbf{M}}}
\newcommand{\bN}{\ensuremath{\mathbf{N}}}
\newcommand{\bW}{\ensuremath{\mathbf{W}}}
\newcommand{\bZ}{\ensuremath{\mathbf{Z}}}
\newcommand{\bU}{\ensuremath{\mathbf{U}}}
\newcommand{\bV}{\ensuremath{\mathbf{V}}}
\newcommand{\bQ}{\ensuremath{\mathbf{Q}}}
\newcommand{\bI}{\ensuremath{\mathbf{I}}}
\newcommand{\bY}{\ensuremath{\mathbf{Y}}}
\newcommand{\bPi}{\ensuremath{\boldsymbol{\Pi}}}
\newcommand{\bx}{\ensuremath{\mathbf{x}}}
\newcommand{\bb}{\ensuremath{\mathbf{b}}}
\newcommand{\bA}{\ensuremath{\mathbf{A}}}
\newcommand{\bH}{\ensuremath{\mathbf{H}}}
\newcommand{\br}{\ensuremath{\mathbf{r}}}
\newcommand{\bz}{\ensuremath{\mathbf{z}}}
\newcommand{\spann}{\ensuremath{\operatorname{span}}}
\newcommand{\range}{\ensuremath{\operatorname{range}}}
\newcommand{\green}[1]{{\color{green!65!black} #1}}
\title{New Convergence Analysis of GMRES with \\ Weighted Norms, Preconditioning and Deflation,
\\ Leading to a New Deflation Space\thanks{This version dated \today}}
\author{Nicole Spillane\thanks{CNRS, CMAP, \'Ecole polytechnique, Institut Polytechnique de Paris, 91128 Palaiseau Cedex, France (\textit{nicole.spillane@cmap.polytechnique.fr})}   
~and Daniel B. Szyld\thanks{Department of Mathematics, Temple University, Philadelphia, PA 19122, USA (\textit{szyld@temple.edu})} 
}
\begin{document}

\date{}

\maketitle

\textbf{Keywords: } linear solver, convergence analysis, domain decomposition, deflation space, preconditioning, deflation

\textbf{AMS Subject Classification: } 65F10, 65Y05, 68W40

\pagestyle{myheadings}
\markboth{Nicole SPILLANE and Daniel B. SZYLD}{Convergence Analysis Leading to a New Deflation Space}

\abstract{
New convergence bounds are presented for weighted, preconditioned, and deflated GMRES
for the solution of large, sparse, non-Hermitian linear systems.
These bounds are given for the case when the Hermitian
part of the coefficient matrix is positive definite, the preconditioner is Hermitian positive definite, and
the weight is equal to the preconditioner.
The new bounds are a novel contribution
in and of themselves. In addition, they are sufficiently explicit to indicate how to choose the preconditioner and the
deflation space to accelerate the convergence. One such choice of deflating space is presented,
and numerical experiments illustrate the effectiveness of such space.
}

\tableofcontents

\section{Introduction}

Our aim in this paper is to study effective solutions of linear systems of the form
\begin{equation} \label{linsys:eq}
\bA \bx = \mathbf b, \quad \bA \in \mathbb K^{n \times n},
\end{equation}
where $\mathbb K = \mathbb R$ or $\mathbb C$ and $\bA$ is a large sparse  nonsingular matrix. 
Particular emphasis will be on the cases where $\bA$ has a
positive definite Hermitian (hpd) part. We refer to such matrices $\bA$  as positive definite (pd) matrices. 

We are interested in studying effective approaches to accelerate the convergence of the
well-known and widely used GMRES method 
\cite{zbMATH03967793}
for the solution of linear systems. 
There are essentially three components for a successful strategy for this accelerations, which can
be used alone or be combined:
\begin{itemize}
\item preconditioning,
\item weighting,
\item deflating.
\end{itemize}
Standard references for preconditioning include
\cite{be2002,zbMATH01953444,Simoncini.Szyld.07}; for weighted GMRES
\cite{emn2016, zbMATH01268410,  gp2014}; and for deflation
\cite{bepw1998,cs1997,en2009,ggln2013}, and more recently \cite{zbMATH07152809}.
See also \cite{WDGMRES_Sylvester}, where a weighted and deflated version of GMRES is applied to
Sylvester matrix equations.
There are some equivalences between weighting and preconditioning, see, e.g., \cite[Chapter 4]{Malek_Strakos.book}, but here we keep the weight
and the preconditioner separate for further flexibility and generality.
We denote the weighted preconditioned and deflated GMRES algorithm as WPD-GMRES,
and it corresponds to the case where all three acceleration tools are used. 

Our objective in this paper is to propose a new convergence bound for WPD-GMRES that 
is sufficiently explicit to indicate how to choose the preconditioner, weight matrix, 
and especially deflation spaces. 
The new results generalize those in \cite{spillane2023hermitian} where deflation was not considered. Here also, special emphasis is on Hermitian preconditioning and on applying WPD-GMRES in the preconditioner norm, as was done, e.g., in \cite{zbMATH01201042,zbMATH01096035}. 
For the cases we consider, the new bounds improve upon the more general bound of Elman
\cite{EES.83,elman1982iterative}.

In particular, in Section~\ref{hpd_precond:sec}, we present a result explicitly giving
conditions on the preconditioner and the deflation spaces so as to assure fast convergence.
Then, in Section~\ref{sec:Newspace}, we propose a new deflation space which is inspired
by this new bound.
Numerical experiments in Section~\ref{sec:Numerical} illustrate the new results with
the choice of the new space, and show its effectiveness.
In fact, in one of the example problems we present, GMRES fails to converge, while
a deflation space with dimension less than 10\% of the size of the problems,
allows GMRES to converge.

In part inspired by the success of the GenEO coarse 
space \cite{2011SpillaneCR,spillane2013abstract}, and as already mentioned, by the
new bounds we obtain, we use as a deflation space, 
appropriately chosen eigenvectors of the generalized eigenvalue problem 
$\bN \bz = \lambda \bM \bz$, where $\bM$ is the Hermitian part of $\bA$, which is
assumed to be positive definite, and $\bN$ is the skew-Hermitian part of $\bA$.

In summary, our contribution consists of proving a new convergence bound for weighted
preconditioned GMRES, for the special case that the (right) preconditioneer is the same
as the weight, and $A$ is positive definite; and to provide a special deflating space which allows
us to quantify this bound explicitly.

\section{Preliminaries}
We begin by stating some results for weighted GMRES for singular systems. 
As we describe in the next section, deflating produces a 
consistent singular system and thus, analyzing the singular case will be useful
for our analysis of deflated GMRES.

Weighted GMRES is the version of GMRES in which a general inner product 
$\langle \cdot, \cdot \rangle_\bW$ replaces the Euclidean inner product \cite{zbMATH01268410}. 
The Hermitian positive definite (hpd) matrix 
$\bW$ such that $\langle \bx, \by \rangle_\bW = \langle \bW \bx, \by \rangle$ will 
be referred to as the weight matrix. The user inputs an initial vector $\bx_0 \in \mathbb K^n$. 
The approximate solution at iteration $i$ is then characterized by
\begin{equation}
\label{eq:GMRESmin}
\| \br_i \|_\bW = \operatorname{min}\left\{ \| \mathbf b - \bA \bx \|_{ \bW } ; \, {\bx \in \bx_{0}  + \mathcal K_i} \right\},
\end{equation}
where $\mathcal K_i$ is  the Krylov subspace 
\begin{equation}
\label{eq:AKrylov}
\mathcal K_i = \mathcal K_i(\br_0, \bA) =  \spann \{\br_0, \bA \br_0 , \dots , \bA^{i-1}\br_0 \}; \quad \br_0 =  \mathbf b - \bA \bx_0.
\end{equation}

GMRES for singular systems is studied, e.g, in \cite{zbMATH00994071} and \cite{zbMATH06043449} (where GCR is also considered). A very useful result is recalled in Theorem~\ref{th:singularGMRES} with a straightforward generalization to weighted GMRES. 
The proof of \cite[Theorem 2.6]{zbMATH00994071} applies here with the change of inner product;
see also~\cite{ggln2013}.

\begin{theorem}
\label{th:singularGMRES}
Suppose that $\operatorname{range}(\bA) \cap \operatorname{ker}(\bA) = \{0\}$. If $\bA \bx = \rhs$ is consistent, \textit{i.e.}, if it  admits a solution, then, in exact arithmetic,  weighted GMRES determines a solution without breakdown at some step and breaks down at the next step through degeneracy of the Krylov subspace. 
\end{theorem}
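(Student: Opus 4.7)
The plan is to follow the argument of Brown and Walker (Theorem~2.6 in \cite{zbMATH00994071}) and to check, step by step, that the only role of the inner product in their proof is to define what ``minimum'' means in \eqref{eq:GMRESmin}. Since all the structural ingredients---consistency of the system and the range/kernel hypothesis on $\bA$---are purely algebraic, I expect the conclusion to transfer to the weighted norm $\|\cdot\|_\bW$ with no essential new difficulty beyond bookkeeping.

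First I would exploit consistency: fixing any particular solution $\bx^\star$ of $\bA\bx=\bb$, the initial residual satisfies $\br_0 = \bA(\bx^\star - \bx_0) \in \operatorname{range}(\bA)$. Since $\operatorname{range}(\bA)$ is $\bA$-invariant, every Krylov vector $\bA^k\br_0$ stays in $\operatorname{range}(\bA)$, so $\mathcal K_i \subseteq \operatorname{range}(\bA)$ for all $i$. Combining this with $\operatorname{range}(\bA)\cap\operatorname{ker}(\bA)=\{0\}$ gives the key observation that $\bA$ acts injectively---hence, by finite dimension, bijectively---on $\operatorname{range}(\bA)$, and in particular on each $\mathcal K_i$. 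This injectivity is what makes $\bz \mapsto \|\br_0 - \bA\bz\|_\bW^2$ strictly convex on $\mathcal K_i$, so at every iteration preceding Arnoldi breakdown the minimization \eqref{eq:GMRESmin} has a unique minimizer; this delivers the ``no breakdown'' half of the statement.

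Next I would analyse the first index $i^\star$ at which the Krylov subspace ceases to grow, i.e.\ $\mathcal K_{i^\star+1}=\mathcal K_{i^\star}$, equivalently $\bA\mathcal K_{i^\star}\subseteq\mathcal K_{i^\star}$. Because $\bA$ is injective on $\mathcal K_{i^\star}\subseteq\operatorname{range}(\bA)$, the image has the same dimension as $\mathcal K_{i^\star}$, and the inclusion forces $\bA\mathcal K_{i^\star}=\mathcal K_{i^\star}$. Since $\br_0 \in \mathcal K_{i^\star}$, there exists a unique $\bz^\star \in \mathcal K_{i^\star}$ with $\bA\bz^\star = \br_0$, which attains the value zero in \eqref{eq:GMRESmin}---a minimum in \emph{any} norm---so $\bx_{i^\star} := \bx_0 + \bz^\star$ is a solution of $\bA\bx=\bb$ computed at step $i^\star$, while the collapse $\mathcal K_{i^\star+1}=\mathcal K_{i^\star}$ is exactly the announced breakdown at the next step.

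The only subtlety I anticipate is verifying that the Arnoldi breakdown criterion lines up correctly with the weighted inner product: in the $\bW$-Arnoldi process one performs $\bW$-orthogonalization, and breakdown corresponds to a new candidate vector vanishing in $\|\cdot\|_\bW$. Since $\bW$ is hpd, $\|\bv\|_\bW=0$ is equivalent to $\bv=0$, so the weighting does not shift the index at which the Arnoldi process stops, and the preceding argument goes through verbatim. No further genuine obstacle is expected.
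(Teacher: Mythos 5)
Your proof is correct and takes essentially the same approach as the paper, which offers no independent argument but simply asserts that the proof of Brown--Walker's Theorem~2.6 carries over once the Euclidean inner product is replaced by $\langle\cdot,\cdot\rangle_\bW$ --- precisely the claim you verify. Your self-contained reconstruction ($\br_0\in\range(\bA)$ by consistency, injectivity of $\bA$ on $\range(\bA)\supseteq\mathcal K_i$ from the hypothesis, hence a unique minimizer before the grade $i^\star$ and $\bA\mathcal K_{i^\star}=\mathcal K_{i^\star}\ni\br_0$ at the grade) is sound, and you correctly isolate the only place the weight enters: positive definiteness of $\bW$, so that $\|\bv\|_\bW=0\Leftrightarrow\bv=\mathbf{0}$ and the degeneracy index is unchanged.
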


The takeaway from the theorem is that for consistent linear systems, under the condition that 
$\operatorname{range}(\bA) \cap \operatorname{ker}(\bA) = \{\mathbf{0}\}$, we can proceed through the iterations 
in the same manner as with nonsingular systems. 
In exact arithmetic, the characterization \eqref{eq:GMRESmin} of the iterate $\bx_i$ remains valid  in the singular case, until the algorithm breaks down, at 
which point the exact solution has been found.  

We continue in this preliminaries section by reviewing properties of the generalized eigenvalue problem we use
for our new deflation space. For completeness we give the proof of the following result.


\begin{lemma}
\label{lem:prelim}
Let us assume that $\bM$ and $\bN$ are two $n \times n$ matrices with the further assumption that $\bM$ is hpd and $\bN$ is skew-Hermitian. Consider the generalized eigenvalue problem for matrix pencil $(\bN, \bM)$: 
find $\lambda_{j} \in \mathbb C$ and $\bz^{(j)} \in \mathbb C^n \setminus \{\mathbf{0}\}$ such that
\begin{equation}
\label{eq:gevpNM}
\bN \bz^{(j)} = \lambda_{j} \bM \bz^{(j)}.
\end{equation}
Then, the eigenvectors $\bz^{(j)}$ can be chosen to form an $\bM$-orthonormal basis of $\mathbb C^n$, and
the eigenvalues $\lambda_{j}$ are either $0$ or purely imaginary. 
\end{lemma}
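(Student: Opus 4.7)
The plan is to reduce the generalized eigenvalue problem to a standard eigenvalue problem for a skew-Hermitian matrix, and then appeal to the spectral theorem for normal matrices.

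First, since $\bM$ is hpd, it admits a unique hpd square root $\bM^{1/2}$. I would substitute $\by = \bM^{1/2}\bz$ in \eqref{eq:gevpNM}, rewriting the pencil problem as the standard eigenvalue problem
\begin{equation*}
\bK \by = \lambda \by, \qquad \bK := \bM^{-1/2} \bN \bM^{-1/2}.
\end{equation*}
The next step is to check that $\bK$ is skew-Hermitian: using that $\bM^{-1/2}$ is Hermitian and $\bN^* = -\bN$, a direct computation gives $\bK^* = \bM^{-1/2}\bN^*\bM^{-1/2} = -\bK$.

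Since $\bK$ is skew-Hermitian it is normal, so by the spectral theorem it is unitarily diagonalizable. This immediately yields an orthonormal (in the standard Euclidean sense) basis $\{\by^{(j)}\}$ of eigenvectors of $\bK$, with eigenvalues $\lambda_j$. Moreover, from $\bK \by^{(j)} = \lambda_j \by^{(j)}$ and $\bK^* = -\bK$, taking the inner product with $\by^{(j)}$ gives $\lambda_j \|\by^{(j)}\|^2 = -\overline{\lambda_j}\|\by^{(j)}\|^2$, so $\lambda_j = -\overline{\lambda_j}$, meaning $\lambda_j$ is zero or purely imaginary.

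Finally I would pull back the basis by setting $\bz^{(j)} := \bM^{-1/2}\by^{(j)}$, which are eigenvectors of the original pencil by construction, and verify $\bM$-orthonormality:
\begin{equation*}
\langle \bz^{(i)}, \bz^{(j)} \rangle_\bM = \langle \bM \bz^{(i)}, \bz^{(j)} \rangle = \langle \bM^{1/2}\bz^{(i)}, \bM^{1/2}\bz^{(j)} \rangle = \langle \by^{(i)}, \by^{(j)} \rangle = \delta_{ij}.
\end{equation*}
There is no real obstacle here; the only point that deserves care is choosing the right transformation — using $\bM^{1/2}$ rather than, say, a Cholesky factor $\bL$ keeps the intermediate matrix $\bK$ skew-Hermitian, which is exactly what makes the spectral argument work cleanly.
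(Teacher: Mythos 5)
Your proposal is correct and follows essentially the same route as the paper: transform by $\bM^{1/2}$ to the standard eigenvalue problem for the skew-Hermitian matrix $\bM^{-1/2}\bN\bM^{-1/2}$, apply the spectral theorem, and pull the orthonormal eigenbasis back by $\bM^{-1/2}$ to obtain $\bM$-orthonormality. The only cosmetic difference is that the paper derives $\lambda_k+\overline{\lambda_k}=0$ directly from the pencil via $\langle \bN\bz^{(k)},\bz^{(k)}\rangle$, whereas you do it on the transformed matrix $\bK$; both are the same one-line computation.
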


\begin{proof}
We first prove that the eigenvectors can be chosen to form an $\bM$-orthonormal basis of $\mathbb C^n$. 
Let $(\lambda_{j}, \bz^{(j)})$ denote an eigenpair of the generalized eigenvalue problem \eqref{eq:gevpNM}. 
It is immediate to observe that an equivalent eigenvalue problem is 
\[
\bM^{-1/2} \bN \bM^{-1/2} \tilde \bz^{(j)} = \lambda_{j}  \tilde \bz^{(j)}; \quad \tilde \bz^{(j)}  =  \bM^{1/2} \bz^{(j)}  ,
\] 
where $\bM^{1/2}$ denotes the matrix square root of $\bM$.\footnote{By  \cite[theorem 7.2.6, page 439]{hornjoh:85}, $\bM^{1/2}$ is well defined as the unique Hermitian positive semi-definite matrix such that $(\bM^{1/2})^2 = \bM$ and moreover $\bM^{1/2}$ is positive-definite because $\bM$ is positive-definite.}

Matrix $\bM^{-1/2} \bN \bM^{-1/2} $ is skew-Hermitian: $(\bM^{-1/2} \bN \bM^{-1/2})^* = (\bM^{-1/2})^* \bN^* (\bM^{-1/2})^* =  - \bM^{-1/2} \bN \bM^{-1/2} $. Consequently $\bM^{-1/2} \bN \bM^{-1/2} $ is normal and the spectral theorem states that it is unitarily diagonalizable:
\[
\bM^{-1/2} \bN \bM^{-1/2} = \bU \bD \bU^*, \bD \text{ diagonal}, \, \bU \text{ unitary} \text{ (\textit{i.e.}, } \bU^* \bU = \matid).
\] 
It immediately follows, by setting $\bV = \bM^{-1/2} \bU$ that 
\[
\bV^* \bN \bV = \bD  , \, \bD \text{ diagonal}, \, \bV \text{ satisfies } \bV^* \bM \bV = \matid,
\mbox{\rm ~and that~}
\bN \bV = \bM \bV \bD, 
\]
which is equivalent to 
\[
\bN \bz^{(j)} = \lambda_j \bM \bz^{(j)}, \, \forall j, j=1,\ldots, n\,  \text{ where } \bz^{(j)}, 
\text{ is the $j$-th column of } \bV \text{ and } \lambda^{(j)} = D_{jj}.
\] 
Thus,
the eigenvectors in generalized eigenvalue problem \eqref{eq:gevpNM} can be chosen to form an $\bM$-orthonormal basis of $\mathbb C^n$.

Next we prove that the non-zero eigenvalues are purely imaginary. 
Let $(\lambda_{k}, \bz^{(k)})$  denote any eigenpair of the generalized eigenvalue problem \eqref{eq:gevpNM} then 
\[
\langle \bN \bz^{(k)}, \bz^{(k)} \rangle = \lambda_{k} \langle \bM \bz^{(k)}, \bz^{(k)} \rangle = 
\langle \bz^{(k)}, \bN^* \bz^{(k)} \rangle = - \langle \bz^{(k)}, \bN \bz^{(k)} \rangle = - {\lambda_{k}}^* \langle \bM \bz^{(k)}, \bz^{(k)} \rangle.
\]
Since $\bz^{(k)}$ is an eigenvector, $\bz^{(k)}$ is non-zero. Consequently $ \lambda_{k} \langle \bM \bz^{(k)}, \bz^{(k)} \rangle =- {\lambda_{k}}^* \langle \bM \bz^{(k)}, \bz^{(k)} \rangle$ implies that $\lambda_{k} + {\lambda_{k}}^* = 0 =2 \Re(\lambda_{k})$.
\end{proof}

Next we set $\bM$ and $\bN$ to be respectively the Hermitian and skew-Hermitian parts of $\bA$:
\begin{equation}
\label{eq:splitA}
\bA = \bM + \bN, \, \bM = \frac{\bA + \bA^*}{2} \text{ and } \bN = \frac{\bA - \bA^*}{2} \cdot
\end{equation}
We prove a few straightforward properties of the eigenpairs, which we will use later in the paper. If $(\lambda_{k}, \bz^{(k)})$  denotes an eigenpair of the generalized eigenvalue problem \eqref{eq:gevpNM} then
\[
\bA \bz^{(k)} = (\bM + \bN) \bz^{(k)} = (1 + \lambda_{k})\bM \bz^{(k)}, 
\]
where $(1 + \lambda_{k}) \neq 0$ because $\Re(\lambda^{(k)}) = 0$. Similarly, $\bA^* \bz^{(k)} = (1 - \lambda_{k})\bM \bz^{(k)}$ with $(1 - \lambda_{k}) \neq 0$. A consequence is that 
\[
\operatorname{span}(\bA \bz^{(k)}) = \operatorname{span}(\bM \bz^{(k)}) = \operatorname{span}(\bA^* \bz^{(k)}) .
\] 
Since $\bM$ is invertible it also holds that 
\[
\bM^{-1} \bN \bz^{(k)} = \lambda_{k} \bz^{(k)}; \quad (\matid + \bM^{-1} \bN) \bz^{(k)} = (1 + \lambda_{k}) \bz^{(k)};  \quad (\matid + \bM^{-1} \bN)^{-1} \bz^{(k)} = (1 + \lambda_{k})^{-1} \bz^{(k)}. 
\]

\subsection{Weighted and Deflated GMRES}

The purpose of deflation is to replace the linear system \nref{linsys:eq} by a projected linear system that is 
easier to solve iteratively. The deflation operators are introduced next. 

\begin{definition}
\label{def:PDQD}
Let $\bY, \bZ \in \mathbb K^{n\times m}$ be two full rank matrices. Under the assumption that $\ker(\bY^*) \cap \range(\bA \bZ) = \{ \mathbf{0}\}$, let
\[
\bP_D := \matid - \bA \bZ(\bY^* \bA \bZ)^{-1} \bY^* \text{ and }  \bQ_D := \matid - \bZ (\bY^* \bA \bZ)^{-1}\bY^* \bA.
\]
These are projection operators called the deflation operators.  
\end{definition}

The following lemma gives some simple but useful properties of the deflation operators. 

\begin{lemma}
\label{lem:PDQD}
The deflation operators satisfy
\[
\bP_D \bA = \bA \bQ_D = \bP_D \bA \bQ_D,
\]
and
\begin{align*}
\operatorname{ker} (\bP_D) = \operatorname{range}(\bA\bZ), & \quad & \operatorname{range} (\bP_D) = (\operatorname{ker} (\bP_D^*))^\perp = \operatorname{ker}( \bY^*  ), \\
\operatorname{ker} (\bP_D^*) = \operatorname{range} (\bY) ,& \quad  & \operatorname{range} (\bP_D^*) = (\operatorname{ker} (\bP_D))^\perp = \operatorname{ker}( \bZ^*\bA^*  ),  \\
\operatorname{ker} (\bQ_D)  = \operatorname{range}(\bZ), & \quad  & \operatorname{range} (\bQ_D) = (\operatorname{ker} (\bQ_D^*))^\perp = \operatorname{ker}(\bY^*  \bA   ) , \\
\operatorname{ker} (\bQ_D^*) = \operatorname{range} (\bA^*  \bY) , & \quad & \operatorname{range} (\bQ_D^*) = (\operatorname{ker} (\bQ_D))^\perp = \operatorname{ker}( \bZ^*  ).   \\
\end{align*}
\end{lemma}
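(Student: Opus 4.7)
The plan is direct algebraic computation, built on the single telescoping identity $(\bY^*\bA\bZ)^{-1}(\bY^*\bA\bZ) = \matid$ together with the fundamental relation that the range of any finite-dimensional operator is the orthogonal complement of the kernel of its adjoint. First I would note that the hypotheses of Definition~\ref{def:PDQD}, combined with $\bA$ nonsingular and $\bY, \bZ$ of full column rank, make $\bY^*\bA\bZ$ invertible: if $\bY^*\bA\bZ u = \mathbf{0}$ then $\bA\bZ u \in \operatorname{ker}(\bY^*) \cap \operatorname{range}(\bA\bZ) = \{\mathbf{0}\}$, forcing $u=\mathbf{0}$. Idempotence of $\bP_D$ and $\bQ_D$ follows immediately by expanding the square and telescoping the middle factor, which justifies calling them projections. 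The identity $\bP_D \bA = \bA \bQ_D$ is obtained by writing out both sides, each giving $\bA - \bA\bZ(\bY^*\bA\bZ)^{-1}\bY^*\bA$, and $\bA \bQ_D = \bP_D \bA \bQ_D$ then follows from the idempotence of $\bQ_D$.

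For the eight kernel/range identities I would handle $\bP_D$ in full detail and then reduce the remaining ones to the same template. The inclusion $\operatorname{range}(\bA\bZ) \subseteq \operatorname{ker}(\bP_D)$ is immediate from $\bP_D \bA \bZ = \mathbf{0}$ (telescoping again); conversely, if $\bP_D x = \mathbf{0}$ then $x = \bA\bZ\,[(\bY^*\bA\bZ)^{-1}\bY^* x]$ is explicitly in $\operatorname{range}(\bA\bZ)$. For the range identity, $\bY^* \bP_D = \mathbf{0}$ gives $\operatorname{range}(\bP_D) \subseteq \operatorname{ker}(\bY^*)$, and any $x$ with $\bY^* x = \mathbf{0}$ satisfies $\bP_D x = x$ and hence belongs to $\operatorname{range}(\bP_D)$.

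The adjoint $\bP_D^* = \matid - \bY(\bZ^*\bA^*\bY)^{-1}\bZ^*\bA^*$ has exactly the same structural form as $\bP_D$ with the roles of the two defining factors interchanged, so the identities $\operatorname{ker}(\bP_D^*) = \operatorname{range}(\bY)$ and $\operatorname{range}(\bP_D^*) = \operatorname{ker}(\bZ^*\bA^*)$ follow by rerunning the previous two arguments. The statements for $\bQ_D$ and $\bQ_D^*$ drop out of the template in the same way once $\bZ$ and $\bY$ are traded appropriately. Finally, each orthogonal-complement identity in the lemma is just the adjoint duality recalled at the outset. No genuine obstacle arises here: every claim collapses to the single telescoping cancellation above, and the main care needed is in organizing the eight statements so as to avoid repeating essentially the same short argument multiple times.
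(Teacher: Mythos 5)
Your proof is correct, and it is exactly the standard direct-computation argument (telescoping $(\bY^*\bA\bZ)^{-1}(\bY^*\bA\bZ)=\matid$, explicit inclusions both ways for each kernel and range, and adjoint duality for the orthogonal complements) that the paper leaves implicit — the lemma is stated there without proof. Your added observation that the assumption $\ker(\bY^*)\cap\range(\bA\bZ)=\{\mathbf{0}\}$ together with the full-rank and nonsingularity hypotheses forces $\bY^*\bA\bZ$ to be invertible is a correct and welcome justification of a point the paper also takes for granted.
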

\vspace*{-5mm}
Let $\bx_*$ be the solution of \nref{linsys:eq}. We write $ \bx_* = \bQ_D \bx_* + (\bI - \bQ_D) \bx_*$, and
we rewrite the linear system \nref{linsys:eq}  as two independent linear systems for each of the two components
as follows,
\begin{align*}
\bA \bx_* = \bb &\Leftrightarrow \left\{\bP_D \bA \bx_* = \bP_D \bb \text{ and } (\bI - \bP_D)\bA \bx_* = (\bI - \bP_D)\bb \right\}\\
&\Leftrightarrow \left\{  \bA \bQ_D \bx_* = \bP_D \bb \text{ and } \bA (\bI - \bQ_D) \bx_* = (\bI - \bP_D)\bb\right\}.
\end{align*}

Each of the two linear systems can be solved by a different linear solver. On one hand, 
\begin{equation}\label{deflatedsys:eq}
(\matid - \bQ_D) \bx_* = \bZ (\bY^* \bA \bZ) ^{-1} \bY^* \rhs
\end{equation}
is computed with a direct solver. On the other hand, $\bQ_D \bx_*$ is computed by applying (preconditioned) weighted GMRES to the consistent, so called \textit{deflated} linear system 
\begin{equation}\label{deflatedsysP:eq}
\bP_D \bA \tilde\bx = \bP_D \rhs 
\end{equation}
and setting $\bQ_D \bx_* = \bQ_D \tilde\bx $. This is justified by the following
one-line  proof (which is essentially \cite[Lemma 3.2]{zbMATH07152809}),
\[
\bA \bQ_D \tilde\bx = \bP_D \bA \tilde\bx = \bP_D \rhs= \bP_D \bA \bx_* =    \bA \bQ_D \bx_* \Leftrightarrow  \bQ_D \tilde\bx =  \bQ_D \bx_*, 
\]
since  $\bA$ is nonsingular; see further \cite{zbMATH07152809, zbMATH05837408} for more details on deflated GMRES.

In those references and in this paper,
it is implicitly assumed that the number of columns $m$ of $\bY$ and $\bZ$ is not too large so that 
the solution
of the solution of a linear system with the $m \times m$ coefficient matrix  $\bY^* \bA \bZ$ is not too expensive. 
Such solutions are needed when
computing $(\matid - \bQ_D) \bx_*$ as in  \nref{deflatedsys:eq}, and at every application of $\bP_D$ and $\bQ_D$. 

We now focus on solving \nref{deflatedsysP:eq}.
If weighted GMRES is applied directly to this projected system, Theorem~\ref{th:singularGMRES} 
tells us that weighted GMRES converges to the solution as long as $\ker(\bP_D \bA) \cap \range(\bP_D \bA) = \{\mathbf 0 \}$, or by 
Lemma~\ref{lem:PDQD}, if  $\range(\bZ) \cap \ker(\bY^*)  = \{\mathbf 0 \}$. 
For the iterative solution of this system \nref{deflatedsysP:eq}, we consider the use of a preconditioner,
as we discuss next.

\section{Weighted and Deflated right-preconditioned GMRES}

Let $\bH$ be a nonsingular matrix in $\mathbb K^{n\times n}$. We will call it the preconditioner. We precondition the deflated system on the right, which means that we solve the following system,
\begin{equation}
\label{eq:PAHu=Pb}
\bP_D \bA \bH  \bu = \bP_D \rhs; \text{ and then set } \tilde\bx = \bH \bu. 
\end{equation}
{In practice the algorithm is implemented in the $\tilde \bx$ variable rather than in the $ \bu$ variable. This is trivial since the $i$-th residual is $ \bP_D \rhs - \bP_D \bA \bH \tilde \bu_i =  \bP_D \rhs - \bP_D \bA  \tilde\bx_i$. The algorithm produces approximate solutions for $\tilde \bx$ that we will denote by $\bx_i$ and that satisfy $\bx_i = \bH \bu_i$. } 

Equation~\eqref{eq:PAHu=Pb} is a consistent linear system with a singular coefficient matrix $ \bP_D \bA \bH$. 
By Theorem~\ref{th:singularGMRES} (see also \cite[Theorem 3.4]{zbMATH07152809}), 
weighted and deflated preconditioned GMRES converges for every starting vector if  
\[
\operatorname{range}(\bP_D \bA \bH) \cap \operatorname{ker}(\bP_D \bA \bH) = \{\mathbf 0\} \, \Leftrightarrow \, \operatorname{range}(\bP_D) \cap  \operatorname{ker}(\bQ_D \bH)= \{\mathbf 0\} ,
\]
since $\bP_D \bA = \bA \bQ_D$. By Lemma~\ref{lem:PDQD}, the condition can be rewritten as 
\[
\operatorname{ker}( \bY^*  ) \cap \operatorname{range}(\bH^{-1} \bZ)= \{ \mathbf{0}\} .
\]
\begin{remark}
This is the same condition as in \cite[Theorem 3.5]{zbMATH07152809} where left preconditioning is considered. 
Indeed, the Krylov subspaces ${\mathcal K_i}(\bP_D \bA \bH, \bP_D \rhs -  \bP_D \bA \bH \bu_0)$ with $\bu_0 = \bH \bx_0 $ and ${\mathcal K_i}(\bH \bP_D \bA , \bH \bP_D \rhs - \bH \bP_D \bA \bx_0)$ stop growing at the same iteration, \textit{i.e.,} 
the coefficient matrices have the same grades in the sense of \cite[Section 6.2]{zbMATH01953444}. Indeed,  
${\mathcal K_i}(\bH \bP_D \bA , \bH \bP_D \rhs - \bH \bP_D \bA \bx_0) = \bH {\mathcal K_i}(\bP_D \bA \bH, \bP_D \rhs -  \bP_D \bA \bH \bu_0)$ with $\bx_0 = \bH \bu_0 $. 
\end{remark}

The following theorem summarizes the two fundamental conditions that we have just identified. 

\begin{theorem}
\label{th:wellposed}
The deflation operators are well defined and weighted GMRES does not break down when solving the deflated and preconditioned linear system \eqref{eq:PAHu=Pb} if 
\[
\ker(\bY^*) \cap \range(\bA \bZ) = \{ \mathbf{0}\} \text{ and }\operatorname{ker}( \bY^*  ) \cap \operatorname{range}(\bH^{-1} \bZ)= \{ \mathbf{0}\}.
\]
\end{theorem}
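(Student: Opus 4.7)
The plan is to treat the two hypotheses as belonging to two essentially independent statements: the first hypothesis is what makes the deflation operators themselves well defined (it is exactly the condition under which the $m\times m$ matrix $\bY^*\bA\bZ$ that appears in Definition~\ref{def:PDQD} is invertible), and the second hypothesis is what, via Theorem~\ref{th:singularGMRES}, rules out breakdown of weighted GMRES applied to the singular but consistent system \eqref{eq:PAHu=Pb}. Both claims have already been sketched in the discussion preceding the theorem, so the proof is primarily a matter of organising those arguments cleanly.

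For the first part, I would show that $\ker(\bY^*)\cap\range(\bA\bZ)=\{\mathbf{0}\}$ forces $\bY^*\bA\bZ\in\mathbb{K}^{m\times m}$ to be nonsingular. Indeed, if $\bY^*\bA\bZ\bv=\mathbf{0}$ for some $\bv\in\mathbb{K}^m$, then $\bA\bZ\bv\in\range(\bA\bZ)\cap\ker(\bY^*)=\{\mathbf{0}\}$, so $\bA\bZ\bv=\mathbf{0}$; nonsingularity of $\bA$ together with full rank of $\bZ$ then gives $\bv=\mathbf{0}$. Hence $(\bY^*\bA\bZ)^{-1}$ exists and the projectors $\bP_D$ and $\bQ_D$ of Definition~\ref{def:PDQD} are well defined.

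For the second part, the consistency of \eqref{eq:PAHu=Pb} is immediate (taking $\bu=\bH^{-1}\bx_*$ already yields a solution), so Theorem~\ref{th:singularGMRES} applies to the weighted GMRES iteration on $\bP_D\bA\bH$ as soon as
\[
\range(\bP_D\bA\bH)\cap\ker(\bP_D\bA\bH)=\{\mathbf{0}\}.
\]
The remaining step is to translate this into the second hypothesis. Using that $\bH$ is nonsingular and $\bP_D\bA=\bA\bQ_D$, together with the nonsingularity of $\bA$, I would simplify the range as $\range(\bP_D\bA\bH)=\range(\bP_D)$ and the kernel as $\ker(\bP_D\bA\bH)=\bH^{-1}\ker(\bQ_D)=\range(\bH^{-1}\bZ)$. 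Combined with the identities $\range(\bP_D)=\ker(\bY^*)$ and $\ker(\bQ_D)=\range(\bZ)$ from Lemma~\ref{lem:PDQD}, the breakdown condition collapses to exactly $\ker(\bY^*)\cap\range(\bH^{-1}\bZ)=\{\mathbf{0}\}$.

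I do not anticipate any genuine obstacle: the only point requiring a bit of care, already flagged in the remark after \eqref{eq:PAHu=Pb}, is keeping track of on which side the preconditioner $\bH$ enters, so that the kernel identification correctly produces $\range(\bH^{-1}\bZ)$ rather than $\range(\bH\bZ)$ or similar. Once the range/kernel computations are laid out, the theorem reduces to a two-line assembly of Definition~\ref{def:PDQD}, Lemma~\ref{lem:PDQD}, and Theorem~\ref{th:singularGMRES}.
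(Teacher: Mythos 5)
Your proposal is correct and follows essentially the same route as the paper: the paper presents Theorem~\ref{th:wellposed} as a summary of the discussion immediately preceding it, where the first condition is identified as the invertibility requirement for $\bY^*\bA\bZ$ in Definition~\ref{def:PDQD} and the second as the non-breakdown condition $\range(\bP_D\bA\bH)\cap\ker(\bP_D\bA\bH)=\{\mathbf{0}\}$ from Theorem~\ref{th:singularGMRES}, rewritten via $\bP_D\bA=\bA\bQ_D$ and Lemma~\ref{lem:PDQD}. Your range/kernel computations and the injectivity argument for $\bY^*\bA\bZ$ are exactly the intended justifications.
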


Two cases stand out that will be useful further on in the article.

\begin{lemma}
\label{lem:Pdorth}
 If $\bH$ is hpd and $\bY = \bH \bA \bZ$, then the projection operator $\bP_D$ is orthogonal in the $\bH$ inner product. Moreover the condition $\operatorname{ker}( \bY^*  ) \cap \operatorname{range}(\bA \bZ)= \{ \mathbf{0}\}$ in Theorem~\ref{th:wellposed} is automatically satisfied.  
\end{lemma}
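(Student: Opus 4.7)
The plan is to verify two things: (1) that $\bP_D$ coincides with its $\bH$-adjoint (since $\bP_D$ is already a projection by Definition~\ref{def:PDQD}, $\bH$-self-adjointness is what is needed to upgrade it to an $\bH$-orthogonal projection); and (2) that under the hypothesis $\bY=\bH\bA\bZ$, the first condition of Theorem~\ref{th:wellposed} follows from the positive-definiteness of $\bH$ together with the injectivity of $\bA\bZ$.

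For the first point, the natural strategy is to compute $\bH\bP_D$ and $\bP_D^*\bH$ side by side and check they agree. Substituting $\bY=\bH\bA\bZ$ into the definition of $\bP_D$, one sees that $\bH\bA\bZ=\bY$ and $\bY^*\bA\bZ=\bZ^*\bA^*\bH\bA\bZ$, which is Hermitian. This is the key observation: Hermiticity of the small matrix $\bY^*\bA\bZ$ means its inverse is also Hermitian, so the factors that appear when taking the conjugate transpose of $\bP_D$ and multiplying by $\bH$ on the right produce exactly the same expression as $\bH\bP_D$. Concretely,
\[
\bH\bP_D = \bH - \bY(\bY^*\bA\bZ)^{-1}\bY^* = \bH - \bY(\bY^*\bA\bZ)^{-*}\bY^* = \bP_D^*\bH,
\]
which is the $\bH$-self-adjointness relation $\langle \bP_D \bx,\by\rangle_\bH = \langle \bx,\bP_D\by\rangle_\bH$. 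Combined with $\bP_D^2=\bP_D$, this yields that $\bP_D$ is the $\bH$-orthogonal projection onto $\range(\bP_D)=\ker(\bY^*)$ along $\ker(\bP_D)=\range(\bA\bZ)$.

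For the second point, I would take $\bv\in\ker(\bY^*)\cap\range(\bA\bZ)$, write $\bv=\bA\bZ\bw$, and exploit the hpd structure: testing $\bY^*\bv=0$ against $\bw$ gives $\bw^*\bZ^*\bA^*\bH\bA\bZ\bw=0$, i.e., $\|\bH^{1/2}\bA\bZ\bw\|^2=0$, hence $\bA\bZ\bw=0$ because $\bH^{1/2}$ is invertible (Lemma~\ref{lem:prelim}'s footnote on the square root applies), and therefore $\bv=\mathbf{0}$.

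I do not expect any serious obstacle here; the only subtlety is remembering that the well-posedness of $(\bY^*\bA\bZ)^{-1}$ itself already follows from the same positive-definite quadratic-form argument (using that $\bZ$ has full rank and $\bA$ is nonsingular, so $\bA\bZ$ has trivial kernel), which is the natural way to see that $\bZ^*\bA^*\bH\bA\bZ$ is hpd rather than merely Hermitian positive semi-definite. The cleanest writeup would begin with this observation, then perform the symmetry computation, then close with the one-line kernel/range argument.
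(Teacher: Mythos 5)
Your proof is correct, and it reaches both conclusions by a more computational route than the paper's. For the $\bH$-orthogonality, the paper invokes Lemma~\ref{lem:PDQD} to identify $\ker(\bP_D)=\range(\bA\bZ)$ and $\range(\bP_D)=\ker(\bY^*)$, and then observes that these two subspaces are $\bH$-orthogonal exactly when $\range(\bH\bA\bZ)=\range(\bY)$, which is immediate from $\bY=\bH\bA\bZ$. You instead verify the operator identity $\bH\bP_D=\bP_D^*\bH$ directly, the key point being that $\bY^*\bA\bZ=\bZ^*\bA^*\bH\bA\bZ=(\bH^{1/2}\bA\bZ)^*(\bH^{1/2}\bA\bZ)$ is Hermitian (indeed hpd, since $\bA$ is nonsingular and $\bZ$ has full column rank), so that $\bH-\bY(\bY^*\bA\bZ)^{-1}\bY^*$ is Hermitian; combined with idempotency this gives the $\bH$-orthogonal projection. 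For the non-intersection condition, the paper notes that $\ker(\bY^*)=(\range(\bA\bZ))^{\perp^{\bH}}$ and that a subspace meets its $\bH$-orthogonal complement only in $\mathbf{0}$; your quadratic-form argument ($\|\bH^{1/2}\bA\bZ\bw\|^2=0\Rightarrow\bA\bZ\bw=\mathbf{0}$) is the same fact unwound. What your version buys is self-containedness --- it does not lean on the kernel/range identities of Lemma~\ref{lem:PDQD} --- and it makes explicit that $\bY^*\bA\bZ$ is automatically invertible under these hypotheses, so the deflation operators are well defined without separately imposing the assumption of Definition~\ref{def:PDQD}. The paper's version is shorter once those identities are in hand. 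Both arguments are sound.
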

\begin{proof}
Let us assume that $\bH$ is hpd, then 
\begin{align*} 
\text{$\bP_D$ is $\bH$-orthogonal } & \Leftrightarrow& \ker(\bP_D) \perp^\bH \range(\bP_D) \\
& \Leftrightarrow&  \range(\bA \bZ)  \perp^\bH \ker(\bY^*) \\
& \Leftrightarrow&   \range(\bH \bA \bZ)  = \range(\bY), 
\end{align*}
a condition that is of obviously satisfied if $\bH \bA \bZ  = \bY $. 
It also follows from the assumptions that 
\[
 \ker(\bY^*) = \ker((\bH \bA \bZ)^*) = (\range(\bH \bA \bZ))^\perp =  (\range(\bA \bZ))^{\perp^\bH} \text{ so } \ker(\bY^*) \cap \range(\bA \bZ) = \{ \mathbf{0}\}.  
\]
\end{proof}

The following result gives a condition relating the preconditioner $\bH$ and the choice of the
deflating subspace represented by $\bY$.

\begin{lemma}
\label{lem:invariant} 
If $\bY$ is an invariant subset of $\bH^* \bA^*$, then $\bQ_D \bH \bP_D = \bH \bP_D$. Moreover, the two conditions in Theorem~\ref{th:wellposed} are equivalent.  
\end{lemma}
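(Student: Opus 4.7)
The plan is to translate the invariance hypothesis into a matrix identity of the form $\bY^*\bA\bH = \bS^*\bY^*$, and then to use it twice: once to kill a term in the formula for $\bQ_D\bH\bP_D$, and once to relate the two injectivity conditions in Theorem~\ref{th:wellposed}.

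First, I would unpack the hypothesis. The statement that $\range(\bY)$ is invariant under $\bH^*\bA^*$ means there exists an $m\times m$ matrix $\bS$ with $\bH^*\bA^*\bY = \bY\bS$; taking the adjoint gives the working identity $\bY^*\bA\bH = \bS^*\bY^*$. Since $\bH^*\bA^*$ is invertible (both $\bH$ and $\bA$ are nonsingular) and $\bY$ has full column rank $m$, the restriction of $\bH^*\bA^*$ to the $m$-dimensional invariant subspace $\range(\bY)$ is injective, so $\bS$ (and hence $\bS^*$) is invertible. I will need this invertibility for the second claim.

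For the first claim, I would expand $\bQ_D\bH\bP_D = \bH\bP_D - \bZ(\bY^*\bA\bZ)^{-1}\bY^*\bA\bH\bP_D$. Substituting the working identity $\bY^*\bA\bH = \bS^*\bY^*$ yields $\bY^*\bA\bH\bP_D = \bS^*\bY^*\bP_D$. By Lemma~\ref{lem:PDQD}, $\range(\bP_D)=\ker(\bY^*)$, hence $\bY^*\bP_D=0$, and the cross term vanishes, giving $\bQ_D\bH\bP_D = \bH\bP_D$.

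For the equivalence of the two conditions in Theorem~\ref{th:wellposed}, I would first recast each as the invertibility of an $m\times m$ matrix. Because $\bA$ is nonsingular and $\bZ$ has full column rank, $\bA\bZ$ has full column rank; thus $\ker(\bY^*)\cap\range(\bA\bZ) = \{\mathbf 0\}$ is equivalent to $\bY^*\bA\bZ$ being an invertible $m\times m$ matrix. Likewise, $\bH^{-1}\bZ$ has full column rank, so $\ker(\bY^*)\cap\range(\bH^{-1}\bZ) = \{\mathbf 0\}$ is equivalent to invertibility of $\bY^*\bH^{-1}\bZ$. Rewriting the working identity as $\bY^*\bA = \bS^*\bY^*\bH^{-1}$ and post-multiplying by $\bZ$ gives $\bY^*\bA\bZ = \bS^*(\bY^*\bH^{-1}\bZ)$, and since $\bS^*$ is invertible, the two factors on the right-hand side have the same invertibility status as the left. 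The equivalence follows.

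The only mildly nontrivial point is the invertibility of $\bS$, which is needed for the equivalence but not for the first identity; once that is in hand, both parts reduce to a one-line substitution.
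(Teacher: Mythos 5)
Your proof is correct, and it reaches the same two conclusions by a noticeably more explicit route than the paper. The paper argues entirely at the level of subspaces: it observes that, $\bQ_D$ being a projection, $\bQ_D\bH\bP_D=\bH\bP_D$ holds once $\range(\bH\bP_D)=\range(\bQ_D)$, rewrites this as $\ker(\bY^*\bH^{-1})=\ker(\bY^*\bA)$, i.e.\ $\range(\bA^*\bY)=\range(\bH^{-*}\bY)$, which is exactly the invariance hypothesis; for the equivalence it pushes the two non-breakdown conditions through the bijections $\bA^{-1}$ and $\bH$ to get $\ker(\bY^*\bA)\cap\range(\bZ)=\{\mathbf 0\}$ and $\ker(\bY^*\bH^{-1})\cap\range(\bZ)=\{\mathbf 0\}$, and invokes the same kernel identity. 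You instead encode the invariance as $\bH^*\bA^*\bY=\bY\bS$ with $\bS$ invertible (a point the paper leaves implicit but which your argument rightly isolates, since it is needed for the equivalence and not for the first identity), kill the cross term in $\bQ_D\bH\bP_D$ directly via $\bY^*\bP_D=\mathbf 0$, and recast each condition of Theorem~\ref{th:wellposed} as invertibility of an $m\times m$ matrix related to the other by the invertible factor $\bS^*$. The two arguments rest on the same fact --- invariance forces $\bA^*\bY$ and $\bH^{-*}\bY$ to have the same range --- but yours is more self-contained and computationally checkable, while the paper's is shorter because it leans on the range/kernel catalogue of Lemma~\ref{lem:PDQD}. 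No gaps.
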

\begin{proof}
 Let $\bY$ be an invariant subset of $\bH^* \bA^*$. Since $\bQ_D$ is a projection, $\bQ_D \bH \bP_D = \bH \bP_D$ if $\range (\bH \bP_D) = \range ( \bQ_D)$ or equivalently $\ker (\bY^* \bH^{-1}) =  \ker (\bY^* \bA)$ or, again equivalently $\range (\bA^* \bY) = \range (\bH^{-*} \bY)$. The condition holds if $\bY$ is an invariant subset of $\bH^* \bA^*$. Moreover the conditions in  Theorem~\ref{th:wellposed} can be equivalently rewritten as  
\[
\ker(\bY^* \bA) \cap \range(\bZ) = \{ \mathbf{0}\} \text{ and }\operatorname{ker}( \bY^* \bH^{-1}  ) \cap \operatorname{range}(\bZ)= \{ \mathbf{0}\},
\]
showing that they are equivalent when  $\bY$ is an invariant subset of $\bH^* \bA^*$.
\end{proof}

\begin{remark}
The projection operators $\bP_D$ and $\bQ_D$ are entirely defined through their range and their kernel. 
This means that, $\bY$ and $\bZ$ need only be defined up to their ranges, not necessarily for the
particular choice of their columns. 
\end{remark}

\section{General Convergence of WPD-GMRES}
The following result extends \cite[Theorem 3]{spillane2023hermitian}
to the case where we have a deflation space.
\begin{theorem}
\label{th:DRPconv}
Assume that the two conditions from Theorem~\ref{th:wellposed} are satisfied. Let 
\linebreak 
$\theta(\bA, \bH, \bW, \bY, \bZ)$, indexed by the operator $\bA$, the preconditioner $\bH$, the weight matrix $\bW$ as well as the deflation spaces represented by $\bY$ and $\bZ$, be defined by
\begin{equation}
\label{eq:DRPsuff}
\theta(\bA, \bH, \bW, \bY, \bZ) :=  \operatorname{inf}\limits_{\by\in\range (\bP_D)\setminus\{\mathbf 0\}}\frac{|\langle {\bP_D \bA \bH \by}, \by \rangle_\bW|^2}{ \| \bP_D \bA\bH \by \|_\bW^2  \|\by\|_\bW^2} \cdot
\end{equation}
Then, at any iteration of WPD-GMRES (\textit{i.e.}, weighted GMRES applied to \eqref{eq:PAHu=Pb}) the relative residual norm satisfies
\[
\frac{ \|\br_{i} \|_\bW^2}{\|\br_{i-1} \|_\bW^2} \leq  1 - \theta(\bA, \bH, \bW, \bY, \bZ),
\]
where $\br_i =\bP_D \rhs - \bP_D \bA \bx_i   =\bP_D \rhs - \bP_D \bA \bH \bu_i.$
\end{theorem}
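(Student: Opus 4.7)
The strategy is to adapt the classical Elman / Eisenstat--Elman--Schultz argument to the weighted, right-preconditioned, deflated setting. At iteration $i$, I would bound $\|\br_i\|_\bW$ from above by testing the WPD-GMRES minimization against the suboptimal candidate obtained by a single $\bW$-steepest-descent step from $\bx_{i-1}$ along the direction $\br_{i-1}$, then optimize the step length to recover precisely the quotient that appears inside the infimum defining $\theta$.

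First I would verify admissibility. Writing $\bu_{i-1} = \bu_0 + \bv_{i-1}$ with $\bv_{i-1} \in \mathcal{K}_{i-1}(\br_0, \bP_D \bA \bH)$, one has $\br_{i-1} = \br_0 - \bP_D \bA \bH \bv_{i-1}$. Since $\br_0 \in \mathcal{K}_i$ and $\bP_D \bA \bH \bv_{i-1} \in \bP_D \bA \bH \cdot \mathcal{K}_{i-1} \subset \mathcal{K}_i$, it follows that $\br_{i-1} \in \mathcal{K}_i$. Consequently, for every $\alpha \in \mathbb{K}$, the vector $\bu_{i-1} + \alpha \br_{i-1}$ lies in $\bu_0 + \mathcal{K}_i$ and its associated residual equals $\br_{i-1} - \alpha \bP_D \bA \bH \br_{i-1}$. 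By the minimization property \eqref{eq:GMRESmin} applied to \eqref{eq:PAHu=Pb},
\[
\|\br_i\|_\bW^2 \;\leq\; \|\br_{i-1} - \alpha \bP_D \bA \bH \br_{i-1}\|_\bW^2 \quad \text{for every } \alpha \in \mathbb{K}.
\]
I would also record that $\br_i \in \range(\bP_D)$ for all $i$: $\br_0 = \bP_D(\rhs - \bA \bH \bu_0) \in \range(\bP_D)$ and $\bP_D \bA \bH$ maps into $\range(\bP_D)$, so this is preserved through the iteration.

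Next I would expand the upper bound in the $\bW$-inner product,
\[
\|\br_{i-1} - \alpha \bP_D \bA \bH \br_{i-1}\|_\bW^2 = \|\br_{i-1}\|_\bW^2 - 2\Re\bigl(\alpha \langle \bP_D \bA \bH \br_{i-1}, \br_{i-1}\rangle_\bW\bigr) + |\alpha|^2 \|\bP_D \bA \bH \br_{i-1}\|_\bW^2,
\]
and minimize over $\alpha \in \mathbb{K}$. The optimum $\alpha^\star = \langle \bP_D \bA \bH \br_{i-1}, \br_{i-1}\rangle_\bW / \|\bP_D \bA \bH \br_{i-1}\|_\bW^2$ yields
\[
\|\br_i\|_\bW^2 \;\leq\; \|\br_{i-1}\|_\bW^2 \left( 1 - \frac{|\langle \bP_D \bA \bH \br_{i-1}, \br_{i-1}\rangle_\bW|^2}{\|\bP_D \bA \bH \br_{i-1}\|_\bW^2 \, \|\br_{i-1}\|_\bW^2} \right).
\]
Since $\br_{i-1} \in \range(\bP_D)\setminus\{\mathbf{0}\}$ (otherwise convergence is already attained and the inequality is trivial), the ratio inside the parenthesis is bounded below by $\theta(\bA,\bH,\bW,\bY,\bZ)$ by the very definition \eqref{eq:DRPsuff}. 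Dividing by $\|\br_{i-1}\|_\bW^2$ closes the argument.

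The main subtlety in this plan is the admissibility step: one must simultaneously check that $\br_{i-1}$ itself, rather than the last Arnoldi direction, produces a feasible update, and that $\br_{i-1}$ lies in the subspace $\range(\bP_D)$ over which $\theta$ is defined as an infimum. Both rest on the elementary inclusions $\br_{i-1} \in \mathcal{K}_i$ and $\range(\bP_D \bA \bH) \subset \range(\bP_D)$; once these are in place, the quadratic minimization in $\alpha$ and the invocation of the infimum are routine.
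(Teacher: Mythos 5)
Your proposal is correct and follows essentially the same route as the paper: both bound the GMRES minimum by a single optimized step along $\br_{i-1}$ (you work in the $\bu$ variable, the paper in the $\bx$ variable via $\bx_{i-1}+\spann(\bH\br_{i-1})$, which is the same candidate set), then expand the $\bW$-norm, take the optimal $\alpha$, and invoke the infimum using $\br_{i-1}\in\range(\bP_D)$. The only cosmetic difference is that you spell out the admissibility inclusions that the paper states as ``it can be seen that''; the paper additionally opens by citing Theorem~\ref{th:wellposed} to justify that the minimization characterization remains valid for the singular system, which your appeal to \eqref{eq:GMRESmin} uses implicitly.
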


Note that by Cauchy-Schwarz inequality for the $\bW$-norm, we have that $\theta(\bA, \bH, \bW, \bY, \bZ) < 1$.

\vspace*{3mm}

\noindent
{\it{Proof of Theorem \ref{th:DRPconv}}}.
By Theorem~\ref{th:wellposed}, there is no breakdown until convergence has been achieved. So at iteration $i$ of 
weighted GMRES applied to \eqref{eq:PAHu=Pb} it holds that 
\begin{equation*}
\| \br_i \|_\bW = \operatorname{min}\left\{ \| \bP_D  \mathbf b -  \bP_D \bA \bH  \bu \|_{ \bW } ; \, {\bu \in \bu_{0}  + \spann \{\br_0,\bP_D  \bA \bH   \br_0 , \dots , (\bP_D\bA  \bH )^{i-1}\br_0 \}   } \right\},
\end{equation*}
where $\br_0 = \bP_D \mathbf b -  \bP_D \bA \bH  \bu_0  = \bP_D \mathbf b -  \bP_D \bA  \bx_0$.
Written in the $\bx$ variable, the minimization result is 
\vspace*{-3mm}
\begin{equation*}
\| \br_i \|_\bW = \operatorname{min}\left\{ \|\bP_D \mathbf b -  \bP_D \bA \bx \|_{ \bW } ; \, {\bx \in \bx_{0}  + \spann \{\bH \br_0, \bH 
\bP_D \bA 
\bH  \br_0 , \dots , (\bH \bP_D\bA  )^{i-1}\bH\br_0 \}   } \right\}.
\end{equation*}
It can be seen that $\bx_{i-1} +\spann( \bH \br_{i-1}) \subset \bx_{0}  + \spann \{\bH \br_0, \bH \bP_D \bA   
\bH
\br_0 , \dots , (\bH\bP_D  \bA )^{i-1} \bH \br_0 \}$ and thus by taking the minimum over a smaller set, the minimum is no smaller, therefore,
\begin{align*}
\| \br_i \|_\bW &\leq \operatorname{min}\left\{ \| \bP_D\mathbf b - \bP_D  \bA \bx \|_{ \bW } ; \, {\bx \in \bx_{i-1}  + \spann (\bH  \br_{i-1}  )   } \right\}\\
& =  \operatorname{min}\left\{ \| \br_{i-1} -  \bP_D \bA \by \|_{ \bW } ; \, {\by \in \spann (\bH  \br_{i-1} )   } \right\}\\
& =  \| \br_{i-1} -  \alpha_{i-1}\bP_D  \bA \bH  \br_{i-1} \|_{ \bW } \text{ with } \alpha_{i-1} = \frac{\langle {\bP_D \bA \bH \br_{i-1}}, \br_{i-1} \rangle_\bW}{ \|\bP_D \bA \bH \br_{i-1} \|_\bW^2 } \cdot
\end{align*}
The value of $\alpha_{i-1}$ comes from projecting $\br_{i-1}$ $\bW$-orthogonally onto $ \spann ( \bP_D \bA \bH \br_{i-1})$. It now holds that $( \br_{i-1} -  \alpha_{i-1}\bP_D  \bA \bH  \br_{i-1}) \perp^\bW \bP_D \bA \bH \br_{i-1}$ and    
\[
\| \br_i \|_\bW ^2 \leq \|\br_{i-1} -  \alpha_{i-1}\bP_D  \bA \bH  \br_{i-1}\|_\bW^2 = \|\br_{i-1}\|_\bW^2 - |\alpha_{i-1}|^2 \|\bP_D  \bA \bH  \br_{i-1} \|_\bW^2 ~.
\]
The result follows by dividing by $\| \br_{i-1} \|_\bW^2$ and recalling that $\br_{i-1} \in \range(\bP_D)$.
\cvd 

\begin{remark}
The convergence bound in Theorem~\ref{th:DRPconv} is pessimistic for GMRES. Indeed, it is derived from $\| \br_i \|_\bW \leq \operatorname{min}\left\{ \|\bP_D \mathbf b - \bP_D  \bA \bx \|_{ \bW } ; \, {\bx \in \bx_{i-1}  + \spann (\bH  \br_{i-1}  )   } \right\}$ where the global minimization property of GMRES has been bounded by minimizing over a one-dimensional space. For this reason, the bound in Theorem~\ref{th:DRPconv} holds also for all restarted and truncated versions of GMRES and even for the minimal residual algorithm. The remark carries over to all convergence results in the article since they are essentially bounds for $\theta(\bA, \bH, \bW, \bY, \bZ)$.  
\end{remark}

For left preconditioning, the same bound holds with the norms on the left hand side replaced by the norms of the preconditioned residuals.  

\section{The case of hpd preconditioning for $\bA$ positive definite}
\label{hpd_precond:sec}
In the remainder of this article  we make the following three assumptions, which are somehow natural to consider.
\begin{itemize}
\item The coefficient  matrix $ \bA$ is positive definite (in the sense that it has positive definite Hermitian part),
\item The preconditioner $\bH$ is hpd,
\item WPD-GMRES is applied using the inner product induced by the preconditioner, \textit{i.e.},  $\bW = \bH$. 
\end{itemize}

The quantity in the convergence bound of Theorem~\ref{th:DRPconv} can now be rewritten as 
\begin{align}
\theta(\bA, \bH, \bH, \bY, \bZ) &=  \operatorname{inf}\limits_{\by\in\range (\bP_D)\setminus\{\mathbf 0\}} \frac{|\langle \bH\bP_D  \bA \bH \by, \by \rangle|^2}{ \|\bP_D  \bA \bH \by \|_{\bH}^2 \| \by\|_{\bH}^2 } 
\label{theta:eq}
\\ 
&=  \operatorname{inf}\limits_{\by\in\range (\bH \bP_D)\setminus\{\mathbf 0\}} \frac{|\langle\bP_D  \bA \by, \by \rangle|^2}{ \| \bP_D  \bA \by \|_{\bH}^2 \| \by\|_{\bH^{-1}}^2 } \cdot \nonumber
\end{align}

From here, two cases are considered that differ by the constraint imposed on the deflation spaces. In each case, the objective is to make explicit a condition that must be satisfied by $\bH$, $\bY$ and $\bZ$ in order to ensure fast convergence. The results are summed up in the next theorem.
The following is our main result which extends \cite[Theorem 6]{spillane2023hermitian}
to the case where we have a deflation space.
\begin{theorem}
\label{th:abscv2}
Let us assume that $\bA$ is positive definite, $\bH$ is hpd and $\bW = \bH$. 
Let
\[
\bM = \frac{\bA + \bA^*}{2}
\]
denote the Hermitian part of $\bA$, and $\lambda_{\min}(\bH\bM)$ and $\lambda_{\max}(\bH\bM)$ denote the extreme eigenvalues of $\bH \bM$. The quantity $\theta$ in the convergence result of WPD-GMRES  (Theorem~\ref{th:DRPconv})  can be bounded as follows. 
\begin{enumerate}
\item If $\bY = \bH \bA \bZ$, \textit{i.e.}, $\bP_D$ is $\bH$-orthogonal then 
\[
\theta(\bA, \bH, \bH, \bH \bA \bZ , \bZ) \geq  \frac{\lambda_{\min}(\bH \bM)}{\lambda_{\max}(\bH\bM)} \times  \operatorname{inf}\limits_{\by\in  \ker(\bZ^* \bA^* \bA^{-1} )\setminus\{\mathbf 0\}} \frac{|\langle  \bA^{-1} \by, \by \rangle|}{ \langle    \by,{\bM^{-1}}  \by \rangle} \cdot
\]
\item If $\bY$ is an invariant subset of $\bH \bA^*$ and $\operatorname{ker}( {\bY}^*) \cap \operatorname{range}(\bH^{-1} \bZ)= \{ \mathbf{0}\}$, 
 then   
\[
\theta(\bA, \bH, \bH, \bY , \bZ) \geq  \frac{\lambda_{\min}(\bH \bM)}{\lambda_{\max}(\bH\bM)} \times  \operatorname{inf}\limits_{\by\in \ker ({\bY}^*)\setminus\{\mathbf 0\}} \frac{|\langle \bA^{-1} \by, \by \rangle|}{ \langle \bM^{-1} \by, \by \rangle} \cdot 
\]
\end{enumerate}
\end{theorem}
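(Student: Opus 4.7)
The plan is to reduce both cases to a common Rayleigh-quotient estimate and then separate a factor of $\lambda_{\min}(\bH\bM)/\lambda_{\max}(\bH\bM)$ using standard spectral bounds. I start from the rewritten form of $\theta$ given at the end of \eqref{theta:eq}, namely
\begin{equation*}
\theta(\bA,\bH,\bH,\bY,\bZ) = \inf_{\bw \in \range(\bH\bP_D) \setminus \{\mathbf 0\}} \frac{|\langle \bP_D \bA \bw, \bw\rangle|^2}{\|\bP_D\bA\bw\|_\bH^2\,\|\bw\|_{\bH^{-1}}^2}.
\end{equation*}
The first task is to identify $\range(\bH\bP_D)$ in each case. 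In case 1, the choice $\bY = \bH\bA\bZ$ and a direct substitution give $\range(\bH\bP_D) = \bH\ker((\bH\bA\bZ)^*) = \ker(\bZ^*\bA^*)$. In case 2, Lemma~\ref{lem:invariant} supplies $\bQ_D\bH\bP_D = \bH\bP_D$, so $\range(\bH\bP_D) \subseteq \range(\bQ_D) = \ker(\bY^*\bA)$; a dimension count (both sides have dimension $n-m$) forces equality.

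Next I collapse the numerator and denominator. In case 2, membership $\bw \in \ker(\bY^*\bA)$ makes the correction term in $\bP_D\bA\bw$ vanish, so $\bP_D\bA\bw = \bA\bw$ outright. In case 1, $\bw \in \ker(\bZ^*\bA^*)$ kills the correction after taking the appropriate adjoint, yielding $\langle \bP_D\bA\bw,\bw\rangle = \langle \bA\bw,\bw\rangle$; the $\bH$-orthogonality of $\bP_D$ from Lemma~\ref{lem:Pdorth} then gives $\|\bP_D\bA\bw\|_\bH \leq \|\bA\bw\|_\bH$. Both cases reduce to
\begin{equation*}
\theta \geq \inf_{\bw \in \mathcal{S}} \frac{|\langle \bA\bw,\bw\rangle|^2}{\|\bA\bw\|_\bH^2\,\|\bw\|_{\bH^{-1}}^2},
\end{equation*}
with $\mathcal{S} = \ker(\bZ^*\bA^*)$ in case 1 and $\mathcal{S} = \ker(\bY^*\bA)$ in case 2.

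The spectrum of $\bH\bM$ is then introduced through two Rayleigh-quotient inequalities. Since $\bN$ is skew-Hermitian, $|\langle \bA\bw,\bw\rangle| \geq \langle \bM\bw,\bw\rangle$. The generalized eigenproblems $\bM\bw = \mu\bH^{-1}\bw$ and $\bH\bu = \nu\bM^{-1}\bu$ are equivalent to $\bH\bM\bw = \mu\bw$ and $\bM\bH\bu = \nu\bu$, whose spectra coincide, giving the pointwise bounds
\begin{equation*}
\langle \bM\bw,\bw\rangle \geq \lambda_{\min}(\bH\bM)\,\|\bw\|_{\bH^{-1}}^2, \qquad \|\bA\bw\|_\bH^2 \leq \lambda_{\max}(\bH\bM)\,\|\bA\bw\|_{\bM^{-1}}^2.
\end{equation*}
Multiplying and inserting one factor of $|\langle \bA\bw,\bw\rangle|$ per side turns the previous estimate into
\begin{equation*}
\theta \geq \frac{\lambda_{\min}(\bH\bM)}{\lambda_{\max}(\bH\bM)} \cdot \inf_{\bw \in \mathcal{S}} \frac{|\langle \bA\bw,\bw\rangle|}{\|\bA\bw\|_{\bM^{-1}}^2}.
\end{equation*}

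Finally, the change of variables $\by = \bA\bw$ rewrites $|\langle \bA\bw,\bw\rangle| = |\langle \bA^{-1}\by,\by\rangle|$ and $\|\bA\bw\|_{\bM^{-1}}^2 = \langle \bM^{-1}\by,\by\rangle$, while mapping $\ker(\bZ^*\bA^*)$ onto $\ker(\bZ^*\bA^*\bA^{-1})$ in case 1 and $\ker(\bY^*\bA)$ onto $\ker(\bY^*)$ in case 2, producing the two stated bounds. The main obstacle is the first step: correctly characterizing the effective domain $\range(\bH\bP_D)$ and justifying the collapse $\langle\bP_D\bA\bw,\bw\rangle = \langle\bA\bw,\bw\rangle$ in case 1. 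Once this algebra is settled, everything else is routine Rayleigh-quotient bookkeeping.
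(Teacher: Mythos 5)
Your proof is correct and takes essentially the same route as the paper's: both rewrite $\theta$ over $\range(\bH\bP_D)$, collapse $\bP_D$ via $\bH$-orthogonality (case 1) or the invariance identity $\bQ_D\bH\bP_D=\bH\bP_D$ (case 2), use $|\langle\bA\bw,\bw\rangle|\geq\langle\bM\bw,\bw\rangle$, and extract the factor $\lambda_{\min}(\bH\bM)/\lambda_{\max}(\bH\bM)$ as Rayleigh-quotient bounds before the change of variables $\by=\bA\bw$ (the paper phrases this as relaxing two of three infima to all of $\mathbb K^n$, you apply the spectral bounds pointwise — the same estimate). The only item the paper includes that you omit is the check, in case 1, that the remaining non-breakdown condition $\ker(\bZ^*\bA^*)\cap\range(\bZ)=\{\mathbf 0\}$ holds automatically when $\bA$ is positive definite, but this does not affect the derivation of the bound.
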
 

Before we give the proof of the theorem, we observe that since the
matrices $\bH$ and $\bM$ are hpd, the eigenvalues of $\bH \bM$ are real and positive.  

\vspace*{2mm}
{\em Proof of Theorem~\ref{th:abscv2}.}
\begin{enumerate}
\item  
This case corresponds to Lemma~\ref{lem:Pdorth} where it is proved that only the condition  $\operatorname{ker}( \bY^*  ) \cap \operatorname{range}(\bH^{-1} \bZ)= \{ \mathbf{0}\}$ is necessary in order to ensure that WPD-GMRES does not break down. Here, that condition is equivalent to $\operatorname{ker}( \bZ^* \bA^* ) \cap \operatorname{range}( \bZ)= \{ \mathbf{0}\}$. To prove this, consider a vector $\bZ \bz$ in that intersection: $\bZ^* \bA^* \bZ \bz = 0$ implies that
\[
0 = \langle \bZ^* \bA^* \bZ \bz, \bz \rangle = \underbrace{\langle \bZ^* \bM \bZ \bz, \bz \rangle}_{\in \mathbb R} - \underbrace{\langle \bZ^* (\bA^* - \bM) \bZ \bz, \bz \rangle}_{\in \mathbb I} ,
\] 
where $\mathbb I$ stands for the imaginary axis.
Thus, $\langle \bZ^* \bM \bZ \bz, \bz \rangle=0$ and since $M$ is positive definite, $\bZ \bz=0$,
and this allows us to conclude that $\bz = \mathbf 0$ and that 
\mbox{$\operatorname{ker}( \bZ^* \bA^* ) \cap \operatorname{range}( \bZ)= \{ \mathbf{0}\}$}.

Letting $\by \in \range (\bP_D)$, i.e., $\by = \bP_D \by$, and using the fact that
$\bH \bP_D = \bP_D^* \bH$,
the numerator in \nref{theta:eq}
satisfies
\[
|\langle \bH \bP_D \bA  \bH \by, \by \rangle|^2 =   |\langle \bP_D^* \bH \bA  \bH \by, \by \rangle|^2 =  | \langle  \bH \bA  \bH \by, \bP_D\by \rangle|^2 =  | \langle  \bH \bA  \bH \by, \by \rangle|^2. 
\]
The first term in the denominator satisfies
\[
\| \bP_D  \bA\bH \by \|_{\bH}^2 \leq  \|   \bA\bH \by \|_{\bH}^2 ~\cdot
\]
Thus,
\begin{align*}
\theta(\bA, \bH, \bH, \bH \bA \bZ, \bZ) 
 &\geq  \operatorname{inf}\limits_{\by\in\range (\bP_D)\setminus\{\mathbf 0\}} \frac{|\langle  \bH \bA  \bH \by, \by \rangle|^2}{\| \bA  \bH \by\|_{\bH } \langle \bH \by, \by \rangle} \\ 
 &=  \operatorname{inf}\limits_{\by\in\range (\bH \bP_D)\setminus\{\mathbf 0\}} \frac{|\langle   \bA   \by, \by \rangle|^2}{ \|   \bA   \by \|_{\bH}^2  \| \by\|_{\bH^{-1}}^2}  \\ 
 &\geq  \operatorname{inf}\limits_{\by\in\range (\bA \bH \bP_D)\setminus\{\mathbf 0\}} \frac{|\langle   \bA^{-1}   \by, \by \rangle|}{ \langle    \by, \bH  \by \rangle} \times  \operatorname{inf}\limits_{\by\in\range (\bH \bP_D)\setminus\{\mathbf 0\}} \frac{|\langle   \bA   \by, \by \rangle|}{\langle \bH^{-1} \by, \by \rangle} \\   
 &\geq  \operatorname{inf}\limits_{\by\in\range (\bA \bH \bP_D)\setminus\{\mathbf 0\}} \frac{|\langle  \bA^{-1} \by, \by \rangle|}{ \langle    \by, \green{\bM^{-1}}  \by \rangle} \times \operatorname{inf}\limits_{\by\in\range (\bA \bH \bP_D)\setminus\{\mathbf 0\}} \frac{\langle   \green{\bM^{-1}}   \by, \by \rangle}{ \langle    \by, \bH  \by \rangle} \\ 
&\quad \quad  \times  \operatorname{inf}\limits_{\by\in\range (\bH \bP_D)\setminus\{\mathbf 0\}} \frac{\langle   \bM   \by, \by \rangle}{\langle \bH^{-1} \by, \by \rangle} \\   
 &\geq  \operatorname{inf}\limits_{\by\in\range (\bA \bH \bP_D)\setminus\{\mathbf 0\}} \frac{|\langle  \bA^{-1} \by, \by \rangle|}{ \langle    \by, \green{\bM^{-1}}  \by \rangle} \times \operatorname{inf}\limits_{\by\in\range \mathbb K^n \setminus\{\mathbf 0\}} \frac{\langle   \green{\bM^{-1}}   \by, \by \rangle}{ \langle    \by, \bH  \by \rangle} \\ 
&\quad \quad \times  \operatorname{inf}\limits_{\by\in \mathbb K ^n\setminus\{\mathbf 0\}} \frac{\langle   \bM   \by, \by \rangle}{\langle \bH^{-1} \by, \by \rangle} \cdot    
\end{align*}
In the fourth line it was used that  $\langle   \bM   \by, \by \rangle \leq | \langle   \bA   \by, \by \rangle |$.  The result in the theorem is proved by recognizing that the two last terms are Rayleigh quotients for the preconditioned operator $\bH \bM$, and recalling that $\range (\bP_D) = \ker(\bY^*) = \ker(\bZ^* \bA^* \bH) $ (Lemma~\ref{lem:PDQD}) so that $\range (\bA \bH \bP_D) =  \ker(\bZ^* \bA^* \bA^{-1})$.  
\item 
This case corresponds to Lemma~\ref{lem:invariant} where it is proved that GMRES does not break down as long as one of the conditions from Theorem~\ref{th:wellposed} is verified, \textit{e.g.}, $\operatorname{ker}( \bY^*  ) \cap \operatorname{range}(\bH^{-1} \bZ)= \{ \mathbf{0}\}$. Taking from the lemma that $\bQ_D \bH \bP_D = \bH \bP_D$,
using that $ \bP_D\bA = \bA \bQ_D $, and from the fact that $\by\in\range (\bQ_D)$ implies $\by = \bQ_D\by$,
we obtain 
\begin{align*}
\theta(\bA, \bH, \bH, \bY, \bZ) &=  \operatorname{inf}\limits_{\by\in\range (\bQ_D)\setminus\{\mathbf 0\}} \frac{|\langle \bA \by, \by \rangle|^2}{ \langle \bH \bA \by, \bA \by \rangle \langle \bH^{-1} \by, \by \rangle} \\
& \geq  \operatorname{inf}\limits_{\by\in\range (\bQ_D)\setminus\{\mathbf 0\}} \frac{|\langle \bA \by, \by \rangle|}{ \langle \bH \bA \by, \bA \by \rangle} \times \operatorname{inf}\limits_{\by\in\range (\bQ_D)\setminus\{\mathbf 0\}}  \frac{| \langle \bA \by, \by \rangle|}{\langle \bH^{-1} \by, \by \rangle} \\
&\geq  \operatorname{inf}\limits_{\by\in\range (\bA \bQ_D)\setminus\{\mathbf 0\}} \frac{|\langle \bA^{-1} \by, \by \rangle|}{ \langle \bH \by, \by \rangle} \times \operatorname{inf}\limits_{\by\in\range (\bQ_D)\setminus\{\mathbf 0\}} \frac{\langle \bM \by, \by \rangle}{\langle \bH^{-1} \by, \by \rangle} \\
&\geq  \operatorname{inf}\limits_{\by\in\range (\bA \bQ_D)\setminus\{\mathbf 0\}} \frac{|\langle \bA^{-1} \by, \by \rangle|}{ \langle \green{\bM^{-1}} \by, \by \rangle} \times \operatorname{inf}\limits_{\by\in\range (\bA \bQ_D)\setminus\{\mathbf 0\}} \frac{\langle \green{\bM^{-1}} \by, \by \rangle}{ \langle \bH \by, \by \rangle}\\ 
&\quad \quad \times \operatorname{inf}\limits_{\by\in\range (\bQ_D)\setminus\{\mathbf 0\}} \frac{\langle \bM \by, \by \rangle}{\langle \bH^{-1} \by, \by \rangle}\\ 
&\geq  \operatorname{inf}\limits_{\by\in\range (\bA \bQ_D)\setminus\{\mathbf 0\}} \frac{|\langle \bA^{-1} \by, \by \rangle|}{ \langle \green{\bM^{-1}} \by, \by \rangle} \times \operatorname{inf}\limits_{\by\in \mathbb K^n\setminus\{\mathbf 0\}} \frac{\langle \green{\bM^{-1}} \by, \by \rangle}{ \langle \bH \by, \by \rangle}\\ 
&\quad \quad \times \operatorname{inf}\limits_{\by\in \mathbb K^n \setminus\{\mathbf 0\}} \frac{\langle \bM \by, \by \rangle}{\langle \bH^{-1} \by, \by \rangle} \cdot 
\end{align*}
The result in the theorem is proved by recognizing that the two last terms are Rayleigh quotients for the preconditioned operator $\bH \bM$, and recalling that, from Lemma~\ref{lem:PDQD}, $\range (\bQ_D) = \ker (\bY^* \bA^{-1} )$ so $\range (\bA \bQ_D) = \ker (\bY^*)$. \cvd
\end{enumerate}

\begin{remark}
In the proof of Theorem~\ref{th:abscv}, in each case, the two matrices $\green{\bM^{-1}}$ can be replaced by any hpd matrix, say $\tilde{\bM}^{-1}$,
and the thesis of the theorem changed appropriately so that, e.g. for case 1,
\[
\theta(\bA, \bH, \bH, \bH \bA \bZ , \bZ) \geq  \frac{\lambda_{\min}(\bH \bM)}{\lambda_{\max}(\bH\tilde\bM)} \times  \operatorname{inf}\limits_{\by\in  \ker(\bZ^* \bA^* \bA^{-1} )\setminus\{\mathbf 0\}} \frac{|\langle  \bA^{-1} \by, \by \rangle|}{ \langle    \by,{\tilde{\bM}^{-1}}  \by \rangle} \cdot
\]
\end{remark}

In the case that $\bY = \bZ$ the two non-breakdown conditions are automatically verified if $\bA$ 
is positive definite and $\bH$ is hpd. For this reason the choice $\bY = \bZ$ is quite natural. 
However, there is no natural new lower bound of  the convergence factor $\theta$ in this special case.
What it does hold is that if $\bY = \bZ$ then we can choose $\bY$ as an invariant subset
of $\bH\bA$; see also Remark~\ref{case2:rem} below.

\section{A new spectral deflation space} \label{sec:Newspace}

In this section our objective is to compute $\bY$ and $\bZ$ in such a way that the convergence of WPD-GMRES is bounded explicitly. More precisely, following the results in Theorem~\ref{th:abscv} we aim to find a subset of vectors that satisfy 
\[
\frac{|\langle  \bA^{-1} \by, \by \rangle|}{ \langle    \by,{\bM^{-1}}  \by \rangle} \geq \gamma
\]
for some choice of $\gamma$. We will show that this can be done by computing eigenvectors of a particular generalized eigenvalue problem. 
Then we connect them to our WPD-GMRES bound by making explicit our choice of $\bY$ and $\bZ$.  

\subsection{Choice of deflation space and convergence of WPD-GMRES}

\begin{definition}[Deflation Space]
\label{def:Z}
Given a pd matrix $\bA$, let $(\lambda_{j}, \bz^{(j)})_{j = 1, \dots n}$ denote the eigenpairs of the generalized eigenvalue problem \eqref{eq:gevpNM}, \textit{i.e.}, $\bN \bz^{(j)} = \lambda_{j} \bM \bz^{(j)}$, with $\bM$ and $\bN$ the Hermitian and skew-Hermitian parts of $\bA$ as in \eqref{eq:splitA}. Let $\tau >0$. Consider the space 
\[
\mathcal Z = \spann  \{ \bz^{(k)} ; |\lambda_{k} | > \tau \}. 
\]
\end{definition}
From Lemma~\ref{lem:prelim} it follows that this generalized eigenvalue problem has a complete set of
eigenvactors, and this is used in the following result.
\begin{theorem}
\label{th:speceq}
Let $\mathcal Z$ be as in Definition~\ref{def:Z}. Its orthogonal complement is the space
$
\mathcal Z^\perp = \spann \{\bM \bz^{(k)} ; |\lambda_{k} | \leq \tau \}.
$
Moreover, any $\by \in \mathcal Z^\perp \setminus\{\mathbf{0}\}$ satisfies 
\[
 \frac{|\langle \bA^{-1} \by, \by \rangle|} { \langle \bM^{-1} \by, \by \rangle } \geq  \frac{1}{{ 1 + \tau^2}} \cdot
\]
\end{theorem}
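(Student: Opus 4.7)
The plan is to exploit the $\bM$-orthonormal basis of eigenvectors $\{\bz^{(k)}\}$ established in Lemma~\ref{lem:prelim}, together with the identity $\bA \bz^{(k)} = (1+\lambda_k)\bM \bz^{(k)}$ noted in the preliminaries. The orthogonal complement characterization comes essentially for free: $\bM$-orthonormality means $\langle \bM \bz^{(k)},\bz^{(j)}\rangle = \delta_{kj}$, so the vectors $\bM \bz^{(k)}$ with $|\lambda_k|\le \tau$ are (standard) orthogonal to every $\bz^{(j)}$ with $|\lambda_j|>\tau$, and a dimension count closes the inclusion. This gives the first claim and, more importantly, supplies a clean expansion for any $\by\in\mathcal Z^\perp$:
\[
\by = \sum_{|\lambda_k|\le\tau} c_k\,\bM \bz^{(k)} .
\]

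Next I would compute the two quadratic forms in the bound by plugging this expansion in and using the eigenrelations. For the denominator, $\bM^{-1}\by=\sum c_k \bz^{(k)}$ and $\bM$-orthonormality yields $\langle \bM^{-1}\by,\by\rangle = \sum_{k}|c_k|^2$. For the numerator, the identity $\bA \bz^{(k)}=(1+\lambda_k)\bM \bz^{(k)}$ gives $\bA^{-1}\bM \bz^{(k)} = (1+\lambda_k)^{-1}\bz^{(k)}$, hence
\[
\langle \bA^{-1}\by,\by\rangle = \sum_{k}\frac{|c_k|^2}{1+\lambda_k}.
\]

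The key observation is then that each $\lambda_k$ is purely imaginary by Lemma~\ref{lem:prelim}, so writing $\lambda_k = i\mu_k$ with $\mu_k\in\mathbb{R}$ and $|\mu_k|\le\tau$, we get $(1+\lambda_k)^{-1} = (1-i\mu_k)/(1+\mu_k^2)$. Taking absolute values and bounding the modulus from below by the (positive) real part,
\[
|\langle \bA^{-1}\by,\by\rangle| \;\ge\; \operatorname{Re}\langle \bA^{-1}\by,\by\rangle \;=\; \sum_{k}\frac{|c_k|^2}{1+\mu_k^2} \;\ge\; \frac{1}{1+\tau^2}\sum_{k}|c_k|^2 \;=\; \frac{1}{1+\tau^2}\langle \bM^{-1}\by,\by\rangle,
\]
which is exactly the claimed inequality after division.

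I do not expect a genuine obstacle here: the computation is driven entirely by the $\bM$-orthonormal diagonalization. The one place to be careful is in matching conventions for the sesquilinear inner product so that the cross terms vanish correctly (so that the sums reduce to diagonal $|c_k|^2$ contributions), and in keeping track of the fact that $\lambda_k\in i\mathbb{R}$ is what makes $\operatorname{Re}(1+\lambda_k)^{-1}$ positive so that the real-part lower bound on the modulus is both valid and sharp enough.
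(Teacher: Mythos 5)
Your proposal is correct and follows essentially the same route as the paper: expand $\by$ in the $\bM$-orthonormal eigenbasis as $\sum c_k \bM\bz^{(k)}$, use $\bA^{-1}\bM\bz^{(k)}=(1+\lambda_k)^{-1}\bz^{(k)}$ to diagonalize both quadratic forms, and bound the modulus of the numerator from below by its real part $\sum |c_k|^2/(1+|\lambda_k|^2)\ge (1+\tau^2)^{-1}\sum|c_k|^2$. The paper's only cosmetic difference is that it writes the lower bound as a maximum of the real and imaginary parts before discarding the imaginary one, which changes nothing.
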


\begin{proof}
Let $\by \in \mathbb K^n$. By Lemma~\ref{lem:prelim}, a set of $n$ eigenvectors $\bz^{(k)}$ can be chosen to form an $\bM$-orthonormal basis of $\mathbb C ^n$ so that 
\[
\by = \sum_{k=1}^n \beta_{k} \bM \bz^{(k)} \text{ with }  \beta_k   = \langle \by, \bz^{(k)} \rangle \in \mathbb C.
\]
The characterization of $\mathcal Z^\perp$ comes from
\[
 \by \in \mathcal Z^\perp \Leftrightarrow \left( \by \perp \bz^{(k)} \text{ if }  |\lambda_{k} | > \tau\right)   \Leftrightarrow \left(  \beta_k  = 0 \text{ if } |\lambda_{k} | > \tau\right) \Leftrightarrow \by \in \spann \{\bM \bz^{(k)} ; |\lambda_{k} | \leq \tau \}.
\]
Now, take any $\by = \sum_{k;  |\lambda_{k} | \leq \tau}  \beta_k  \bM \bz^{(k)} \in \mathcal Z^\perp $ (with $ \beta_k   = \langle \by, \bz^{(k)} \rangle$). By the factorization $ \bA = \bM (\matid +  \bM^{-1} \bN)$, we obtain 
\[
\bA^{-1} \by  = (\matid  + \bM^{-1} \bN)^{-1} \bM^{-1} \by = \sum_{k;  |\lambda_{k} | \leq \tau}  \beta_k  (\matid  + \bM^{-1} \bN)^{-1}  \bz^{(k)} = \sum_{k;  |\lambda_{k} | \leq \tau} \frac{ \beta_k }{ 1 + \lambda_{k}}  \bz^{(k)}
\] 
and
\[
\langle \bA^{-1} \by, \by  \rangle= \left\langle \sum_{k;  |\lambda_{k} | \leq \tau} \frac{ \beta_k }{ 1 + \lambda_{k}}  \bz^{(k)}, 
\sum_{k;  |\lambda_{k} | \leq \tau}  \beta_k  \bM \bz^{(k)} \right\rangle = \sum_{k;  |\lambda_{k} | \leq \tau} \frac{\overline  \beta_k   \beta_k }{ 1 + \lambda_{k}} \cdot 
\]
Thus, the term in the numerator is 
\[
| \langle \bA^{-1} \by, \by \rangle| = \left|  \sum_{k;  |\lambda_{k} | \leq \tau} 
\frac{\overline  \beta_k   \beta_k ( 1 - \lambda_{k})}{( 1 - \lambda_{k})( 1 + \lambda_{k})} \right| = \left|  \sum_{k;  |\lambda_{k} | \leq \tau} \frac{\overline  \beta_k   \beta_k ( 1 -  \lambda_{k})}{ 1 +  |\lambda_{k}|^2} \right|  \cdot
\]
Using that $\lambda_{k} = \pm i | \lambda_{k}|$ and that $| \langle \bA^{-1} \by, \by \rangle| \geq \max (|\Re( \langle \bA^{-1} \by, \by \rangle)|, |\Im( \langle \bA^{-1} \by, \by \rangle)|)$ we can bound this term as follows, 
\[
| \langle \bA^{-1} \by, \by \rangle| \geq 
 \max\left(
\sum_{k;  |\lambda_{k} | \leq \tau} \frac{\overline  \beta_k   \beta_k }{ 1 +  |\lambda_{k}|^2}  ,
\left| \sum_{k;  |\lambda_{k} | \leq \tau} \frac{   \lambda_{k}\overline  \beta_k   \beta_k }{ 1 +  |\lambda_{k}|^2}\right| 
\right)
\geq \frac{1}{1 + \tau^2}  \sum_{k;  |\lambda_{k} | \leq \tau} \overline  \beta_k   \beta_k . 
\]
On the other hand, the denominator can be rewritten as 
\[
\langle \bM^{-1} \by, \by \rangle  =   \langle \sum_{k;  |\lambda_{k} | \leq \tau}  \beta_k   \bz^{(k)} , \sum_{k;  |\lambda_{k} | \leq \tau} \bM  \beta_k   \bz^{(k)} \rangle  = \sum_{k;  |\lambda_{k} | \leq \tau} \overline  \beta_k   \beta_k . 
\]
We finally obtain the result by division and simplification by $\sum_{k;  |\lambda_{k} | \leq \tau} \overline  \beta_k   \beta_k $ (which is not $0$ unless $\by = \mathbf {0}$).  
\end{proof}

In the following we use the short-hand notation $\bA^{-*}$ for $(\bA^{*})^{-1} = (\bA^{-1})^*$.

\begin{theorem}
\label{th:abscv}
Let us assume that $\bA$ is positive definite, $\bH$ is hpd, and $\bW = \bH$. With $\mathcal Z$ as introduced in Definition~\ref{def:Z},
the quantity $\theta$ in the convergence result of WPD-GMRES  (Theorem~\ref{th:DRPconv})  can be bounded as follows. 
If either
\begin{enumerate}
\item  $\range(\bZ) = \mathcal Z$ and $\bY = \bH \bA \bZ$, \textit{i.e.}, $\bP_D$ is $\bH$-orthogonal,  or
\item $\range(\bY) = \mathcal Z$, $\bY$ is an invariant subset of $\bH \bA^*$, and  $\operatorname{ker}( \bY^*  ) \cap \operatorname{range}(\bH^{-1} \bZ)= \{ \mathbf{0}\}$.
\end{enumerate}
Then
\[
\theta(\bA, \bH, \bH,  \bY , \bZ) \geq  \frac{\lambda_{\min}(\bH \bM)}{\lambda_{\max}(\bH\bM)} \times \frac{1}{{1+\tau^2}} \cdot  
\]
\end{theorem}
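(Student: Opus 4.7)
The plan is to combine the abstract convergence bound of Theorem~\ref{th:abscv2} with the spectral estimate of Theorem~\ref{th:speceq}. Theorem~\ref{th:abscv2} already produces the factor $\lambda_{\min}(\bH\bM)/\lambda_{\max}(\bH\bM)$ and reduces the remaining work to bounding an infimum of the form $|\langle \bA^{-1}\by,\by\rangle|/\langle \bM^{-1}\by,\by\rangle$ over a specific subspace; Theorem~\ref{th:speceq} tells us that this ratio is at least $1/(1+\tau^2)$ for every $\by \in \mathcal Z^\perp\setminus\{\mathbf{0}\}$. So all that remains is to check, in each of the two cases, that the subspace over which the infimum is taken lies inside $\mathcal Z^\perp$.

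For Case~2 this is essentially a tautology. The hypothesis $\range(\bY)=\mathcal Z$ gives $\ker(\bY^*)=(\range(\bY))^\perp=\mathcal Z^\perp$, which is exactly the set indexing the infimum in Theorem~\ref{th:abscv2}(2), and so Theorem~\ref{th:speceq} yields the claimed lower bound. The additional assumptions on $\bY$ and $\bZ$ needed in Case~2 match the hypotheses of Theorem~\ref{th:abscv2}(2), granting access to that bound.

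For Case~1 I would expand via the $\bM$-orthonormal eigenbasis $\{\bz^{(k)}\}$ provided by Lemma~\ref{lem:prelim}, using the identity $\bA \bz^{(k)} = (1+\lambda_k)\bM\bz^{(k)}$ with $1+\lambda_k \neq 0$ (since $\Re(\lambda_k)=0$). From $\range(\bZ) = \mathcal Z = \spann\{\bz^{(k)} : |\lambda_k|>\tau\}$ one reads off $\range(\bA\bZ) = \spann\{\bM\bz^{(k)} : |\lambda_k|>\tau\}$. The $\bM$-orthonormality relation $\bz^{(k)*}\bM\bz^{(j)}=\delta_{kj}$ says precisely that $\bz^{(k)}$ is Euclidean-orthogonal to $\bM\bz^{(j)}$ for $j\ne k$, hence $(\range(\bA\bZ))^\perp = \spann\{\bz^{(k)} : |\lambda_k|\leq\tau\}$. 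Multiplying by $\bA$ on the left yields $\ker(\bZ^*\bA^*\bA^{-1}) = \bA\,(\range(\bA\bZ))^\perp = \spann\{\bM\bz^{(k)} : |\lambda_k|\leq\tau\} = \mathcal Z^\perp$, where the last equality is the description of $\mathcal Z^\perp$ in Theorem~\ref{th:speceq}. Inserting this into Theorem~\ref{th:abscv2}(1) and invoking Theorem~\ref{th:speceq} finishes Case~1.

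The only mildly delicate step is the bookkeeping in Case~1, which interleaves the standard Euclidean inner product (used to identify $\ker(\bZ^*\bA^*)$ with $(\range(\bA\bZ))^\perp$) and the $\bM$-inner product (in which $\{\bz^{(k)}\}$ is orthonormal). Once that identification is handled, the theorem is a direct assembly of Theorems~\ref{th:abscv2} and~\ref{th:speceq}.
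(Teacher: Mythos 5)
Your proposal is correct and follows essentially the same route as the paper: combine the abstract bound of Theorem~\ref{th:abscv2} with the spectral estimate of Theorem~\ref{th:speceq}, reducing everything to the subspace identities $\ker(\bZ^*\bA^*\bA^{-1})=\mathcal Z^\perp$ (Case 1) and $\ker(\bY^*)=\mathcal Z^\perp$ (Case 2). The only cosmetic difference is in Case 1, where the paper verifies the identity by noting that $\bA^{-*}\bA\bz^{(k)}=\frac{1+\lambda_k}{1-\lambda_k}\bz^{(k)}$ so that $\range(\bA^{-*}\bA\bZ)=\range(\bZ)=\mathcal Z$, whereas you compute $\bA\bigl(\range(\bA\bZ)\bigr)^\perp$ through the $\bM$-orthonormal eigenbasis; both verifications are valid and rest on the same eigenvector relations from Lemma~\ref{lem:prelim}.
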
 

\noindent
{\em Proof.} 
In order to combine the results from Theorem~\ref{th:DRPconv} and Theorem~\ref{th:speceq}, it only remains to prove one identity in each case.  
\begin{enumerate}
\item 
$ 
\ker ( \bZ^* \bA^* \bA^{-1} ) =
\range(\bA^{-*} \bA \bZ)^\perp = \mathcal Z^\perp = \range( \bZ)^\perp$. 
Indeed, 
\[
\bA \bz^{(k)} = (1 + \lambda_k)\bM \bz^{(k)} \text{ and } \bA^* \bz^{(k)} = (1 - \lambda_k)\bM \bz^{(k)} 
\]
so $\frac{1}{1 + \lambda_k}   \bA \bz^{(k)} = \frac{1}{1 - \lambda_k}   \bA^* \bz^{(k)}$, \textit{i.e.}, 
\[
  \bA^{-*}  \bA \bz^{(k)} = \frac{1 + \lambda_k}{1 - \lambda_k}  \bz^{(k)} \text{ with }  1 + \lambda_k \neq 0 \text{ and } 1 - \lambda_k \neq 0.
\]
 
\item $\ker(\bY^*) = \range{\bY}^\perp = \mathcal Z^\perp$.  
~~~~~~~~~~~~~~  ~~~~~~~~~~~~~~~~ ~~~~~~~~~~~~~~~~~~~~~~~~~~~~~~~  ~~~~~~~~  ~~~~  \cvd
\end{enumerate}
 
\begin{remark} \label{case2:rem}
It must be noted that, in the case labeled 2, the two first assumptions are in general not compatible: $ \mathcal Z$ is not necessarily an invariant subset of $\bH \bA^*$. An exception is the case where $\bH = \bM^{-1}$, \textit{i.e.}, the problem is preconditioned by the inverse
of the Hermitian part of $\bA$. Then $\bH \bA^* \mathcal Z = \mathcal Z$ since, for any eigenvector $\bz^{(k)}$, we have
\[
\bH \bA^* \bz^{(k)} = \bH (1 - \lambda^{(k)}) \bM \bz^{(k)} =  \bM^{-1} (1 - \lambda^{(k)}) \bM \bz^{(k)} =  (1 - \lambda^{(k)}) \bz^{(k)}.    
\] 
The convergence result then holds for any $\bZ$ such that  $\operatorname{ker}( \bY^*  ) \cap \operatorname{range}(\bH^{-1} \bZ)= \{ \mathbf{0}\}$ , \textit{e.g.}, $\bZ = \bY$ or $\bZ = \bM \bY$ or $\bZ = \bM^{-1} \bY$.  
\end{remark}

\begin{remark}
The case where no vectors are deflated corresponds to setting $\tau = \rho(\bM^{-1} \bN)$ (the spectral radius). In that case, the estimate in Theorem~\ref{th:abscv} is exactly the estimate in \cite[Corollary 4.4]{spillane2023hermitian}. 
\end{remark}

\subsection{Real-valued  Case}
\label{sec:real}

We consider now the case where 
$\bA$ and $\rhs$ are real-valued. The solution will also be real-valued and the iterative solver should be applied in $\mathbb R$. In this case, the next theorem proposes an alternate basis for the deflation space from Definition~\ref{def:Z}, for which the deflation operators $\bP_D$ and $\bQ_D$ are real. 

\begin{theorem}[Deflation Space (Real-valued case)]
\label{th:realZ}
Given a pd real matrix $\bA$, let $(\lambda_{j}, \bz^{(j)})_{j = 1, \dots n}$ denote the eigenpairs of generalized eigenvalue problem \eqref{eq:gevpNM}, \textit{i.e.}, $\bN \bz^{(j)} = \lambda_{j} \bM \bz^{(j)}$, with $\bM$ and $\bN$ the Hermitian and skew-Hermitian parts of $\bA$ as in \eqref{eq:splitA}. Let $\tau >0$. The deflation space $\mathcal Z$ from Definition~\ref{def:Z} can also be written as 
\[
\mathcal Z = \spann  \{ \{\Re(\bz^{(k)}), \Im(\bz^{(k)})\} ; |\lambda_{k} | > \tau  \}. 
\]
Since $\lambda_k = \pm i \mu$, there are two eigenvectors with $|\lambda_k| = \mu$. Therefore, it suffices to choose the real and imaginary part of one of them to span the same subspace.
\end{theorem}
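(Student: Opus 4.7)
The plan is to exploit the fact that $\bA$, $\bM$, and $\bN$ are real matrices, which forces the spectrum of the generalized eigenvalue problem \eqref{eq:gevpNM} to be symmetric with respect to complex conjugation, and to use this to pair up complex eigenvectors into pairs whose span coincides with the real span of the real and imaginary parts.

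First, I would show that eigenvectors come in conjugate pairs. Taking the entrywise complex conjugate of $\bN \bz^{(k)} = \lambda_k \bM \bz^{(k)}$ and using that $\bN$, $\bM$ are real yields $\bN \overline{\bz^{(k)}} = \overline{\lambda_k}\, \bM \overline{\bz^{(k)}}$. By Lemma~\ref{lem:prelim}, $\lambda_k$ is purely imaginary, so $\overline{\lambda_k} = -\lambda_k$, and in particular $|\overline{\lambda_k}| = |\lambda_k|$. Hence whenever $\bz^{(k)}$ appears in the definition of $\mathcal Z$, so does (up to rescaling) its conjugate, and the selection rule $|\lambda_k| > \tau$ keeps the two together.

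Next, I would observe that for $\lambda_k \neq 0$ the vectors $\bz^{(k)}$ and $\overline{\bz^{(k)}}$ are linearly independent, since they are eigenvectors associated with the distinct eigenvalues $\lambda_k$ and $-\lambda_k$. Consequently
\[
\spann\{\bz^{(k)}, \overline{\bz^{(k)}}\} \;=\; \spann\bigl\{\tfrac{1}{2}(\bz^{(k)}+\overline{\bz^{(k)}}),\ \tfrac{1}{2i}(\bz^{(k)}-\overline{\bz^{(k)}})\bigr\} \;=\; \spann\{\Re(\bz^{(k)}),\,\Im(\bz^{(k)})\}.
\]
A small auxiliary check (which I would flag but not grind through) is that $\Re(\bz^{(k)})$ and $\Im(\bz^{(k)})$ are nonzero and independent: if $\bz^{(k)}$ were a complex multiple of a real vector $\bv$, then $\bv$ would itself be a real eigenvector for a purely imaginary nonzero $\lambda_k$, which is incompatible with $\bM$ being hpd.

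Finally, I would assemble the two halves. Since $\tau > 0$, only eigenvectors with nonzero eigenvalues enter the span defining $\mathcal Z$. Group the indices $\{k : |\lambda_k| > \tau\}$ into conjugate pairs: for each such pair, the contribution $\spann\{\bz^{(k)}, \overline{\bz^{(k)}}\}$ equals $\spann\{\Re(\bz^{(k)}), \Im(\bz^{(k)})\}$ by the identity above, so one representative per pair suffices. Summing over the pairs recovers the desired description of $\mathcal Z$. The only potential subtlety — and thus the main thing to be careful about — is handling the bookkeeping when the generalized eigenvalue problem has repeated eigenvalues, but the argument above is purely at the level of spans and does not depend on simplicity of the spectrum, so pairing up any $\bM$-orthonormal basis produced by Lemma~\ref{lem:prelim} goes through verbatim.
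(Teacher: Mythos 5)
Your proof is correct and follows essentially the same route as the paper's: conjugate the eigenvalue equation to show the nonzero eigenvalues come in pairs $\pm i\mu$ with conjugate eigenvectors, then identify $\spann\{\bz^{(k)},\overline{\bz^{(k)}}\}$ with $\spann\{\Re(\bz^{(k)}),\Im(\bz^{(k)})\}$. The extra checks you flag (linear independence of $\Re(\bz^{(k)})$ and $\Im(\bz^{(k)})$, and the bookkeeping for repeated eigenvalues) are details the paper omits but they do not change the argument.
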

\begin{proof}
If $\bA$ is real, the non-zero eigenvalues come in complex conjugate pairs. Indeed, let $(\lambda, \bz)$  denote an eigenpair of the generalized eigenvalue problem \eqref{eq:gevpNM} with $\lambda \neq 0$. Then $\lambda = i \mu$ where $\mu \in \mathbb R$, and it follows by taking the complex conjugate of \eqref{eq:gevpNM} that 
\[
\overline{\bN \bz} = \overline{i \mu \bM \bz} \Leftrightarrow  \bN \overline \bz =- i \mu \bM \overline \bz. 
\]
So the complex-conjugate $\overline \bz$ is an eigenvector corresponding to eigenvalue $- i \mu = -\lambda$. We conclude by noticing that the space spanned by $\bz$ and $\overline{\bz}$ is the same as the space spanned by $\Re(\bz)$ and $\Im(\bz)$. 
\end{proof}

In other words, we choose as our deflation space, the real vectors which are the real and imaginary parts of the eigenvectors of the generalized
problem \eqref{eq:gevpNM} corresponding to eigenvalues larger than $\tau$ in modulus.

\section{Numerical experiments}
\label{sec:Numerical}
We present three different types of numerical experiments illustrating how the proposed deflation
space, inspired by our new bounds, can reduce the number of iterations needed for convergence.
In order for the proposed deflation to be practical, either  the cost of computing the
eigenvectors of the generalized skew-Hermitian problem (\ref{eq:gevpNM}) is offset by the
saving in number of iterations, or, as in our second example, when GMRES fails to converge,
but with a relatively small deflation space, convergence is attained.

Since our problems are real-valued ($\mathbb K = \mathbb R$), Hermitian means symmetric. Moreover, the eigenvectors can be grouped into complex conjugate pairs and we apply the strategy from Theorem~\ref{th:realZ} to obtain real-valued deflation operators. The Octave command is 
\texttt{ $\mathbf{Z}$ = [real($\mathbf{V}$(:,1:2:m)), imag($\mathbf{V}$(:,1:2:m))]}, where the columns of $\mathbf{V}$ are the sorted eigenvectors and $\text{m}$ is the (even) number of vectors that should form the deflation space. Finally, we set $\bY = \bH \bA \bZ$ and compute the deflation operators as in Definition~\ref{def:PDQD}. 

\paragraph{Quantities of interest.}

We report the number of iterations needed for WPD-GMRES to achieve convergence from a zero initial vector, with the preconditioners and deflation operators introduced above, and the weight matrix $\bW = \bH$. The stopping criterion is 
$\|\br_i\|_\bH < 10^{-10} \|\bb \|_\bH$.
We also report the upper bound for $\theta$  predicted by Theorem~\ref{th:abscv} (case 1) as well as the corresponding experimental value
\[
\theta_{th} =  \frac{1}{\kappa(\bH\bM)} \times \frac{1}{{1+|\lambda_{m+1}|^2}} \text{ and } \theta_{exp} = \min\limits_i \left\{ 1 - \frac{\| \br_{i+1} \|_\bH^2}{\| \br_{i} \|_\bH^2}\right\}.
\]  
The threshold $\tau $ in the theoretical bound has been substituted for $|\lambda_{m+1}|$, the modulus
of the largest eigenvalue not used for the deflation space. In order to compute $\theta_{th}$, $\kappa(\bH\bM)$ is approximated by solving a linear system for matrix $\bM$ preconditioned by $\bH$ with PCG and taking the ratio of the Ritz values. The theorem guarantees that $\theta_{th} \leq  \theta_{exp}$ and this has indeed been the case for every single one of our numerical experiments.

\subsection{Numerical Illustration: Jordan block}
\label{sec:Jordan}
We first consider an academic problem. Namely, a $1000 \times 1000$ (bidiagonal) scaled Jordan block of the form 
\[
\bA = \begin{pmatrix}
    1 & \alpha & & & & \\
    & 1 & \alpha &&& \\
    &&\ddots & \ddots &&\\
    &&& 1 & \alpha \\
    & & & & 1 
\end{pmatrix}
\]
for which the symmetric part $\bM$ is a tridiagonal Toeplitz matrix. Choosing $\alpha = 0.99$ guarantees that all eigenvalues of the symmetric part $\bM$ are positive so $\bA$ is positive definite; see, e.g.,
\cite[1.4.I16]{hornjoh:85}. The condition number of the symmetric part is $\kappa (\bM) = 199$. 
Our objective is to illustrate one of the main contribution of the article which is the new deflation space. For this reason we do not consider preconditioning or weighting, {i.e.}, $\bW = \bH = \matid$. Figure~\ref{fig:Jordan} shows the convergence curves obtained without deflation and with deflation of a varying number $m$ of vectors. This confirms that the vectors arising from the generalized eigenvalue problem \eqref{eq:gevpNM} are good candidates for deflation. Deflating 50 eigenvectors decreases the iteration count by 348 iteration and deflating 100 eigenvectors decreases the iteration count by 600. 
In Table~\ref{tab:Jordan} we show for various values of $m$, the dimension of the deflation space,
number of iterations to converge, 
the quantities
$\theta_{exp} $
calculated from the experiment,  and $\theta_{th}$ the bound for them as in
Theorem~\ref{th:abscv2}.

\begin{figure}
\begin{center}
\includegraphics[width=0.89\textwidth]{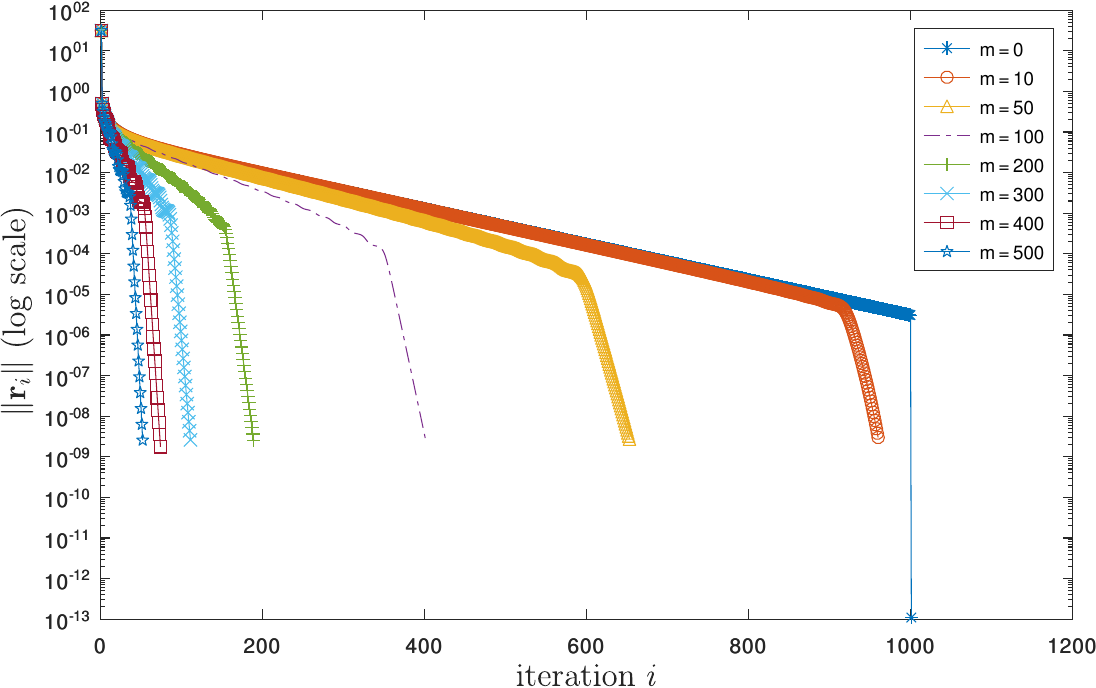}
\end{center}
\caption{Convergence for the scaled Jordan block example (Section~\ref{sec:Jordan}). The number $m$ of deflated vectors varies. Increasing $m$ accelerates convergence as predicted by the theory.}
\label{fig:Jordan}
\end{figure}

\begin{table}[]
    \centering
    \begin{tabular}{c|ccc}
$m$& iter& $\theta_{th}$& $\theta_{exp}$ \\
0 & 1000 & 1.00e-04 & 1.99e-02 \\
10 & 959 & 1.02e-04 & 1.99e-02 \\
50 & 652 & 1.33e-04 & 1.99e-02 \\
100 & 400 & 2.25e-04 & 1.99e-02 \\
200 & 188 & 5.79e-04 & 2.00e-02 \\
300 & 110 & 1.13e-03 & 1.99e-02 \\
400 & 73 & 1.81e-03 & 2.09e-02 \\
500 & 51 & 2.58e-03 & 2.38e-02 \\
    \end{tabular}
    \caption{Scaled Jordan block example (Section~\ref{sec:Jordan}). $m$ is the number of deflated vectors, iter is the iteration count, $\theta_{th}$ and $\theta_{exp}$ are the theoretical lower bound and observed worse value of $\theta$.}                     
    \label{tab:Jordan}
\end{table}

\subsection{Numerical Illustration: Stokes problem}
\label{sec:Stokes}
We consider the incompressible Stokes equation as in \cite[Section 6.2]{Guducu.Liesen.Mehrmann.Szyld.22}. The problem is discretized using the $\mathbb{Q}_1-\mathbb{Q}_1$ finite element approximation of the (unsteady) channel domain problem in IFISS \cite{ifiss}. The system matrices are of the form $\bA = \bM + \bN$ with
\[
\bM = \begin{pmatrix}
\mathcal M - \tau/2 \bA_H & \mathbf{0} \\
\mathbf{0} & - \tau/2 \mathbf C
\end{pmatrix}
\text{ and }
\bN = \begin{pmatrix}
\mathbf 0 & - \tau/2 \mathbf B \\
\tau/2 \mathbf B^* & 0
\end{pmatrix}
\]
where $\tau/2= 10^{-4}$ is the time step, $\mathcal M$ is the mass matrix, $\bA_H$ is the discretization of the second order term in the PDE, $\mathbf C$ is a stabilization term and $\mathbf B$ is the discrete divergence. The total number of degrees of freedom is $n = 12,675$ and the size of the $\mathbf B$ block is $8,450 \times 4,225$. The convergence curves are presented in Figure~\ref{fig:Stokes} where it can be observed that deflation of 1000 vectors allows to converge to a relative tolerance of $10^{-10}$ in 877 iterations whereas the non deflated problem does not converge. Table~\ref{tab:Stokes} shows that the theoretical $\theta_{th}$ does not vary much with the size $m$ of the deflation space. The condition number of the Hermitian part is very large: $\kappa(\bM) = 8.93e6$.

\begin{table}[]
    \centering
    \begin{tabular}{c|ccc}
$m$& iter& $\theta_{th}$& $\theta_{exp}$ \\
0 & 1000 & 1.12e-07 & 2.07e-04 \\
10 & 1000 & 1.12e-07 & 3.41e-03 \\
50 & 1000 & 1.12e-07 & 8.43e-03 \\
500 & 1000 & 1.12e-07 & 4.08e-03 \\
1000 & 877 & 1.12e-07 & 2.28e-03 \\
    \end{tabular}
    \caption{Stokes example (Section~\ref{sec:Stokes}). $m$ is the number of deflated vectors, iter is the iteration count, $\theta_{th}$ and $\theta_{exp}$ are the theoretical lower bound and observed worse value of $\theta$.}                     
    \label{tab:Stokes}
\end{table}

\begin{figure}
\begin{center}
\includegraphics[width=0.89\textwidth]{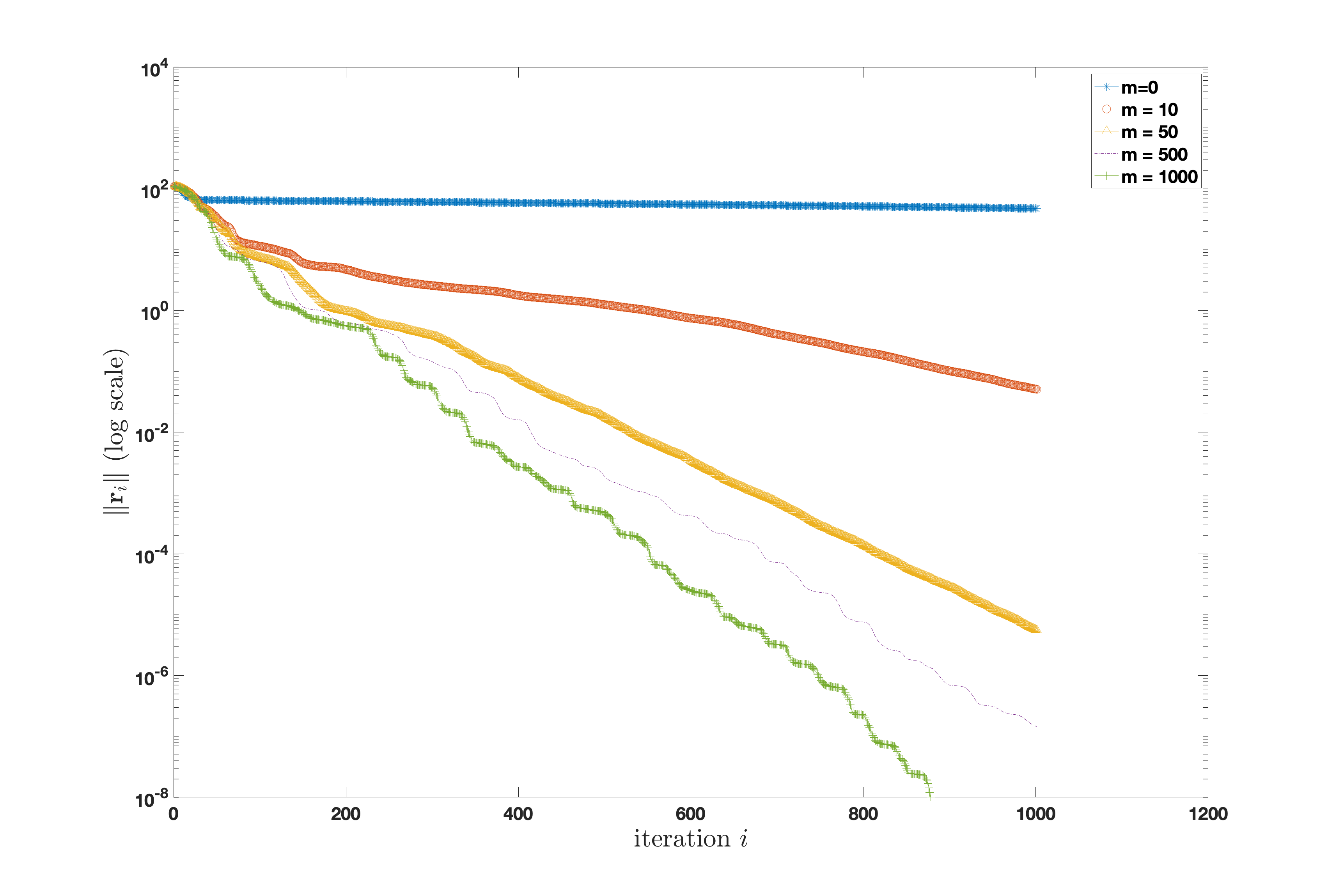}
\end{center}
\caption{Convergence for the Stokes problem (Section~\ref{sec:Stokes}). The number $m$ of deflated vectors varies. Increasing $m$ improves convergence as predicted by the theory.}
\label{fig:Stokes}
\end{figure}

\subsection{Numerical Illustration: Convection-Diffusion-Reaction}
\label{sec:CDR}
In this section, the problem considered is the convection-diffusion-reaction problem posed in $\Omega = [-1,1]^2$.  The strong formulation of the problem is: 
\begin{align*}
c_0 u + \operatorname{div}(\mathbf a u) - \operatorname{div} (\nu \nabla u) &= f \text{ in } \Omega,\\ 
u &= 0 \text{ on } \partial \Omega.
\end{align*}

The variational formulation is: 
Find $u \in H^1_0(\Omega)$ such that 
\[
\underbrace{\int_\Omega \left(\left(c_0 + \frac{1}{2} \operatorname{div} \mathbf a \right) uv+ \nu \nabla u \cdot \nabla v \right)}_{\text{symmetric part}}  + \underbrace{\int_\Omega \left(\frac{1}{2} \mathbf a \cdot \nabla u v - \frac{1}{2} \mathbf a \cdot \nabla v u \right)}_{\text{skew-symmetric part}}  = \int_\Omega fv, 
\]
for all $ v \in H^1_0(\Omega)$. The reaction coefficient $c_0 >0$ and viscosity $\nu >0 $ are assumed to be constant over $\Omega$. 

The right hand side is chosen as 
\[
f(x,y) =  \operatorname{exp} (-2.5(x ^2 + (y +0.8)^2)). 
\]
The convection field is parametrized by a constant $\eta \in \mathbb R$ and takes the values  
\[
\mathbf a(x,y) = \eta \pi \begin{pmatrix} - y - 0.8 \\  x  \end{pmatrix}. 
\]
It can be remarked that $ \operatorname{div} \mathbf a =0$. This is the same PDE as in \cite{spillane2023hermitian} with a change of variables.

\begin{figure}
\begin{center}
\includegraphics[width=0.29\textwidth]{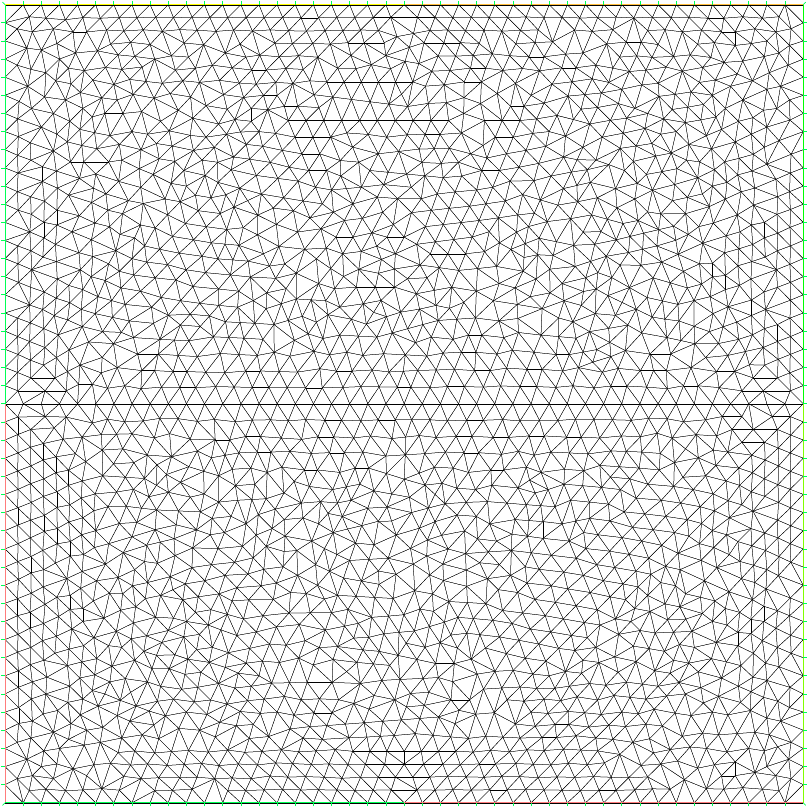}
\includegraphics[width=0.29\textwidth]{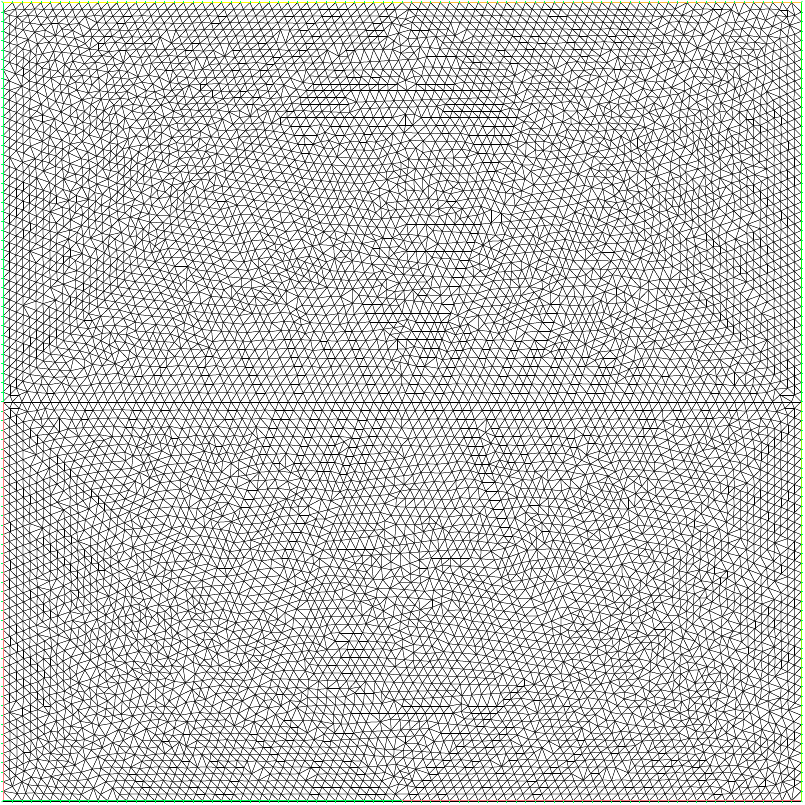}
\end{center}
\caption{Example meshes for the convection-diffusion-reaction problem (Section~\ref{sec:CDR}). Left: 2373 vertices and 4568 triangles. Right: 8643 vertices and 16948 triangles.}
\label{fig:meshnloc11}
\end{figure}

\begin{figure}
\begin{center}
\includegraphics[width = 0.33 \textwidth]{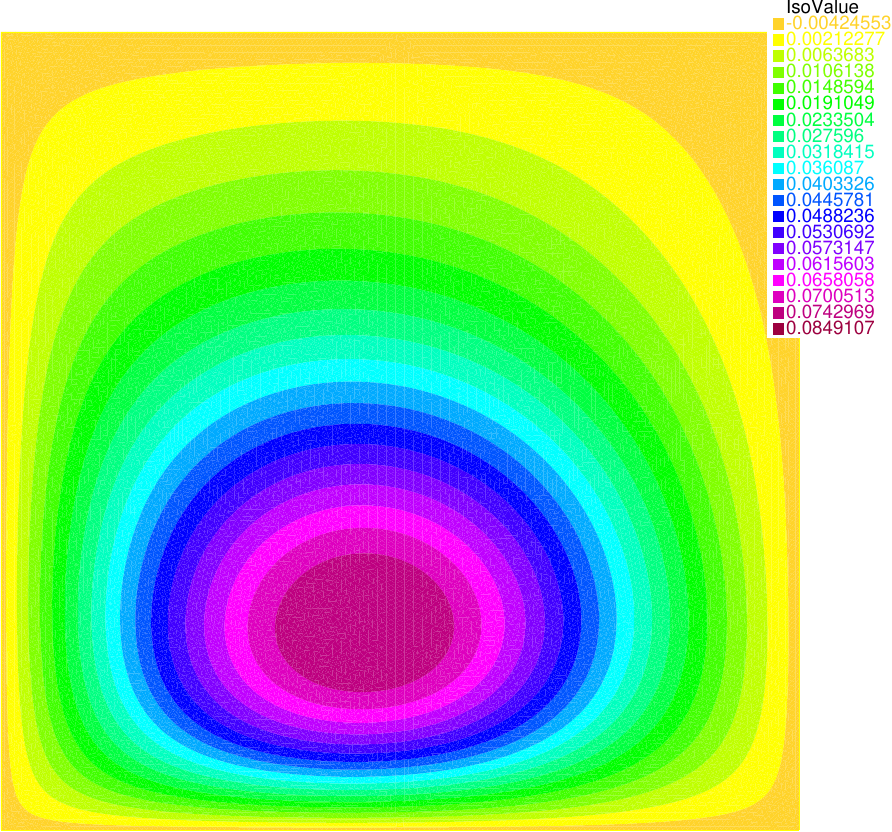}
\includegraphics[width = 0.29 \textwidth]{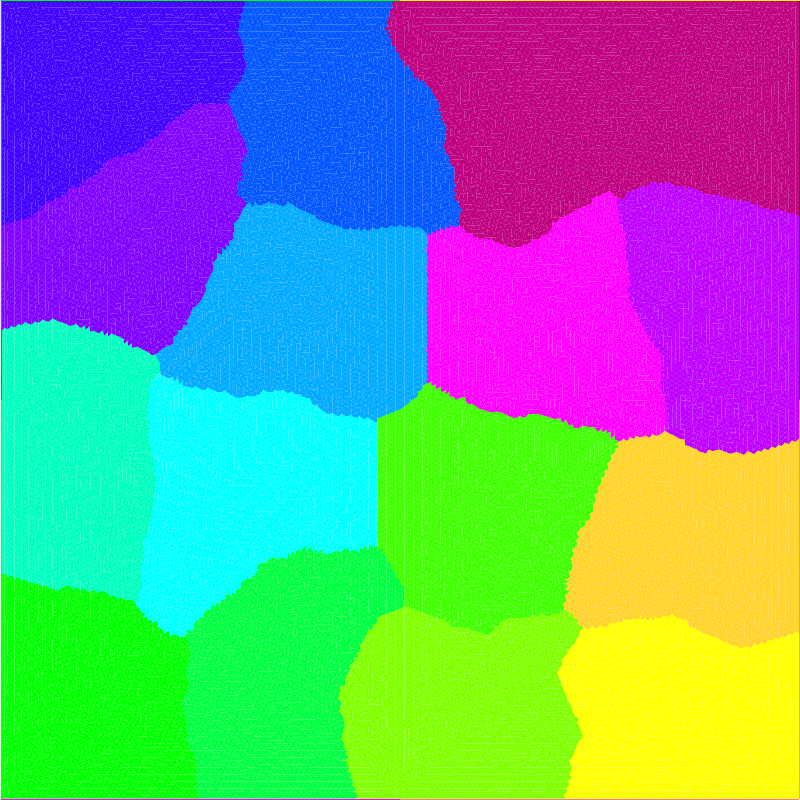}
\end{center}
\caption{Convection-diffusion-reaction problem (Section~\ref{sec:CDR}). Left: Solution. Right: Partition into 16 subdomains for $\HDD$.}
\label{fig:sol}
\end{figure}

The problem is discretized by Lagrange $\mathbb P_1$ finite elements on a triangular mesh. Two example meshes are shown in Figure~\ref{fig:meshnloc11} with different levels of refinement. They are good representatives of the meshes used throughout our numerical testing. We have deliberately not chosen a regular mesh since this assumption is not required by our theory. The solution is shown in Figure~\ref{fig:sol} (left). 
The WPD-GMRES algorithm is implemented in Octave while the finite element matrices are assembled by FreeFem++ \cite{MR3043640}.  All iteration counts for WPD-GMRES correspond to the number of iterations needed to reach $\|\br_i\|_\bH < 10^{-10} \|\bb \|_\bH$ starting from a zero initial vector. 
The Dirichlet boundary condition has been enforced by elimination. 
Let $(\phi_i)_{1 \leq i \leq n}$ denote the $\mathbb P_1$ finite element basis corresponding to the mesh. The problem matrix splits into
\[
\bA = \bM + \eta \tilde\bN, \text{ with } \bM \text{ spd and } \tilde\bN \text{ skew-symmetric},
\]
where the entries of $\bM$ and $\tilde\bN$ are 
\[
\bM_{ij} = \int_\Omega \left(c_0 \phi_i \phi_j + \nu \nabla \phi_i \cdot \nabla \phi_j \right),
\] 
and 
\[
\tilde\bN_{ij}=\int_\Omega \left(\frac{1}{2} \mathbf {\underline a} \cdot \nabla \phi_i \phi_j - \frac{1}{2} \mathbf{\underline a} \cdot \nabla \phi_j \phi_i \right), \text{ with }  \mathbf{\underline a}(x,y) = 2 \pi \begin{pmatrix} 0.1 - y \\  x-0.5 \end{pmatrix}. 
\]
The positive definiteness of $\bM$ is guaranteed by the assumption that $c_0$ and $\nu$ are positive.

\paragraph{Choice of preconditioners}

For our numerical study, three preconditioners are considered:
\begin{itemize}
\item $\bH = \matid$ the identity matrix,
\item $\bH = \Minv$ the inverse of $\bM$, the symmetric part of $\bA$, and
\item $\bH = \HDD$, a domain decomposition (DD) preconditioner based on a partition of the mesh into $N = 16$ subdomains (as shown in Figure~\ref{fig:sol}--Right). 
\end{itemize}

The choice of $\bH = \Minv$ was used, e.g., in \cite{zbMATH05074484}. It is also a fundamental feature of
the CGW method \cite{ConGol76,SzyWid93, zbMATH03619254}, and was successfully used recently for the
solution of Port-Hamiltonian systems~\cite{Guducu.Liesen.Mehrmann.Szyld.22}. In our experiments we used a direct solver (Octave's backslash) to apply $\bM^{-1}$.

For $\bH = \HDD$, The condition number of the resulting preconditioned operator is bounded by
\[
\kappa(\HDD \bM) \leq k_0 \left(1 + \frac{k_0}{\tau'}\right), 
\] 
where $k_0$ denotes the maximal number of subdomains that each mesh element belongs to \cite[Theorem 4.40]{spill2014} 
and  $\tau'$ is a parameter that has been set to $0.15$. The constant in the bound does not depend on the total number $N$ of subdomains or the mesh parameter $h$. 

In detail, $\HDD$ is the Additive Schwarz domain decomposition method with the GenEO coarse space \cite{2011SpillaneCR,spillane2013abstract}. The partition of $\Omega$ into $N$ subdomains $\Omega_s$ is computed automatically by Metis. One layer of overlap is added to each $\Omega_s$. 
Letting ${\bR_s}^\top$ ($s=1,\dots,N$) denote the prolongation by zero of local finite element functions (in $\Omega_s$) to the whole of $\Omega$, the preconditioner can be written as
\[
\HDD = \bPi \sum\limits_{s=1}^N \bR_s^\top \underbrace{(\bR_s \bM \bR_s^\top)^{-1}}_{\text{local solves}} \bR_s \bPi^\top + \bR_0^\top \underbrace{(\bR_0 \bM \bR_0^\top)^{-1}}_{\text{coarse solve}} \bR_0, 
\]
where $\bPi = \matid -  \bR_0^\top (\bR_0 \bM \bR_0^\top)^{-1} \bR_0 \bM$ is the coarse projector (also known as a deflation operator) and the vectors in $\bR_0^\top$ span the coarse space (or deflation space). The particularity of GenEO is that the coarse vectors are constructed by solving the low frequency eigenmodes for a generalized eigenvalue problem in each subdomain. 

\paragraph{Deflation Operator}

We aim to illustrate the convergence result in Theorem~\ref{th:abscv}. Once $\bM$ and $\bN$ have been assembled by FreeFem++, they are imported into Octave. The generalized eigenvalue problem~\eqref{eq:gevpNM} (\textit{i.e.}, $\bN \bz^{(j)} = \lambda_{j} \bM \bz^{(j)}$) is partially solved by \textit{eigs}: the eigenpairs corresponding to the eigenvalues of largest magnitude are approximated. 

\begin{remark}
When $\bH = \HDD$ is applied, the preconditioner already contains a deflation operator $\bPi$ which corresponds to a deflation space formed by eigenvectors of frequency less than a chosen $\tau'$ of well chosen eigenproblems in the subdomains. The presence of a deflation operator and deflation space in $\HDD$ does not interfere with the computation of a deflation space and deflation operator following the definition in Definition~\ref{def:Z}.  
\end{remark}

\begin{figure}[hbt]
\centering
\includegraphics[width=0.7 \textwidth]{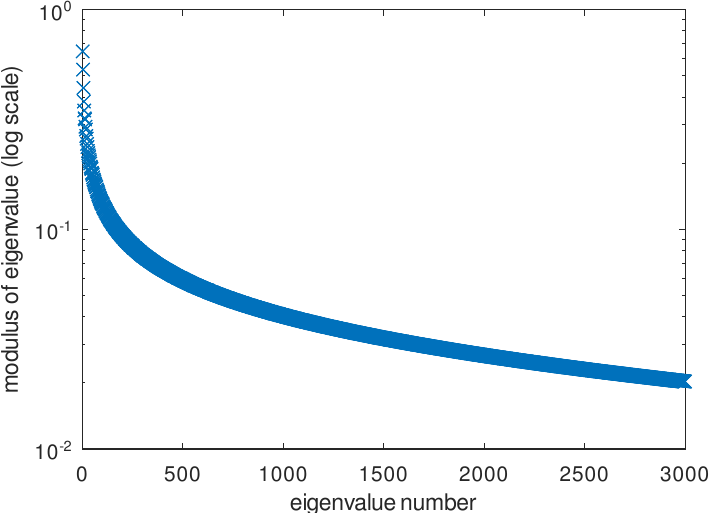}
\caption{Convection-diffusion-reaction problem (Section~\ref{sec:CDR}). $|\lambda_{1}|$ to $|\lambda_{3000}|$ in log scale (solution of~\eqref{eq:gevpNM} for $\eta = 1$).}
\label{fig:spectrumNM} 
\end{figure}

\begin{table}[hbt]
\begin{tabular}{c|lllllll}
m & 0 & 10 & 50 & 100 & 200 & 500 & 1000 \\
\hline
$|\lambda_{\text{m}+1}|$ &    0.65 &  0.32 &    0.18 &  0.13 &  0.09 &  0.06 &  0.04 \\
$\theta_{th}$ for $\bH = \Minv$ & 0.71 &  0.91 &    0.97 &   0.98 &  0.991 &  0.997 &  0.998 \\
$\theta_{th}$ for $\bH = \HDD$  &  0.043 &  0.056 &   0.0597 &  0.0605 &  0.0610 &  0.0614 &  0.0615 \\
$\theta_{th}$ for $\bH = \matid$   &   7.1e-05 &  9.2e-05&     9.8e-05  & 9.9e-05 &  1.001e-04 &  1.007e-04 &  1.009e-04 \\ 
\end{tabular}
\caption{Convection-diffusion-reaction problem (Section~\ref{sec:CDR}). If m is the rank of the deflation space, the second line is the modulus of the first eigenvalue not included in the deflation space, and the next three lines correspond to the theoretical bound $\theta_{th}$ for out three choices of preconditioner when $\eta =1$.}
\label{tab:lambdas}
\end{table}

\paragraph{Results for $\eta = 1$}

For this test case, we use a mesh with 63658 triangles and 32158 vertices. Once the homogeneous Dirichlet boundary condition has been treated by elimination, the problem has 31502 degrees of freedom. The parameter in the convection field is $\eta= 1$ so that $\bA = \bM + \bN$, 
with $\bN = \tilde \bN$. We first solve generalized eigenproblem~\eqref{eq:gevpNM}. The spectrum is represented in Figure~\ref{fig:spectrumNM} where the magnitudes of the 3000 largest (purely imaginary) eigenvalues are represented. Unfortunately,  the distribution of eigenvalues does not consist in a cluster of large eigenvalues and a tail of eigenvalues tightly clustered around 0. This would indeed have been ideal for selecting a value of $\tau$ to compute the deflation space with the formula in Definition~\ref{def:Z}. Table~\ref{tab:lambdas} gives the value of $|\lambda_{\text{m}+1}|$ for a few choices of $\text{m}$ as well as the resulting bounds for $\theta_{th}$. In these bounds we have injected the approximate condition number $\kappa(\bH\bM)$ which takes the following values
\[
\kappa(\matid \bM) = 9896.4 , \quad \kappa(\HDD \bM) = 16.241, \quad \text{and }  \kappa(\Minv \bM) = 1.  
\]

We now solve the problem with all three preconditioners and varying ranks $\text{m}$ of the deflation spaces. The case $\text{m} = 0$ corresponds to weighted and preconditioned GMRES with no deflation. The largest deflation space has rank $1000$ which corresponds to 3.2\% of the total number of 
degrees of freedom (dofs). 
The convergence curves are shown in Figure~\ref{fig:eta1}. We observe that our choice of deflation space indeed improves convergence: when the deflation space gets larger, the number of iterations is reduced. Since the case is only 
mildly nonsymmetric ($\rho(\bM^{-1} \bN) = 0.65$), the two \textit{good} preconditioners for $\bM$ also give good results for the full problem even with $\text{m} = 0$. For $\bH = \matid$, though, the effect of deflation is welcome: deflating $10$ vectors reduces the iteration count from 671 to 543 and deflating 50 vectors reduces the iteration count from 671 to 440.

\begin{figure}
\begin{minipage}{0.66\textwidth}
\includegraphics[width=\textwidth]{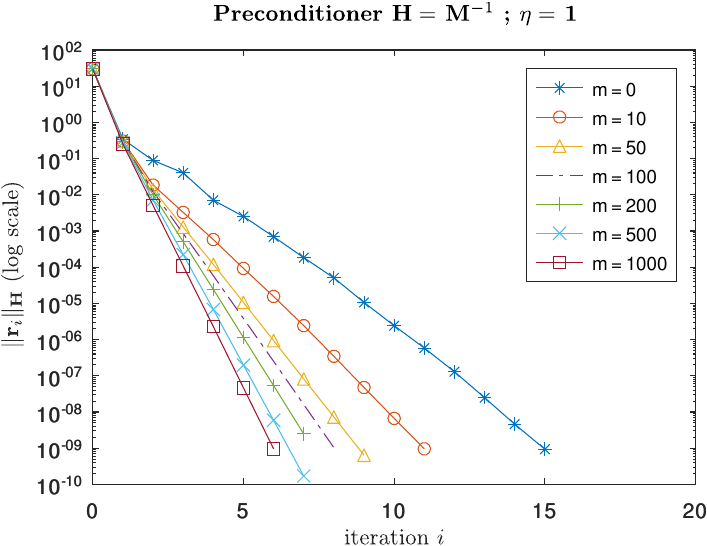} 
\end{minipage}
\begin{minipage}{0.3\textwidth}
\begin{tabular}{rrcc}
m & iter &  $\theta_{th}$ & $\theta_{exp}$  \\ 
0 & 15 & 7.1e-01 & 8.0e-01 \\ 
10 & 11 & 9.1e-01 & 9.7e-01 \\ 
50 & 9 & 9.7e-01 & 9.9e-01 \\ 
100 & 8 & 9.8e-01 & 9.95 e-01\\ 
200 & 7 & 9.91e-01 &9.97 e-01 \\ 
500 & 7 & 9.96e-01 & 9.99e-01 \\ 
1000 & 6 & 9.98e-01 & 9.99e-01 \\ 
\end{tabular}
\end{minipage}
   \\
\begin{minipage}{0.66\textwidth}
   \includegraphics[width=\textwidth]{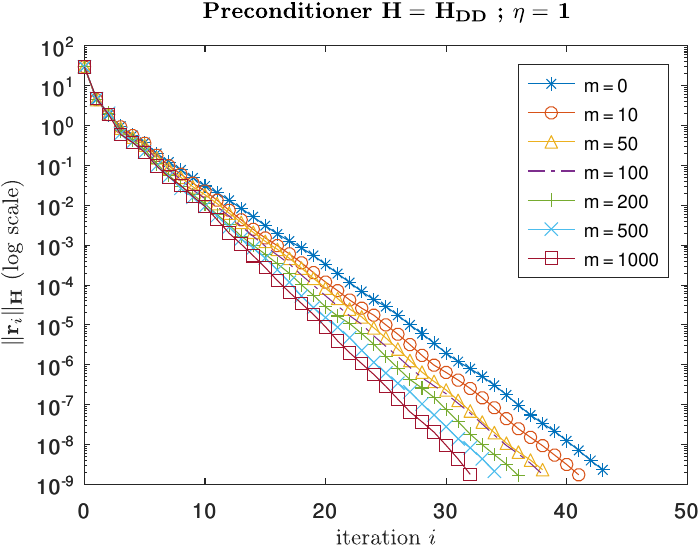}
\end{minipage}
\begin{minipage}{0.30\textwidth}
\begin{tabular}{rrcc}
m & iter &  $\theta_{th}$ & $\theta_{exp}$  \\ 
0 & 43 & 4.3e-02 & 5.3e-01 \\
10 & 41 & 5.6e-02 & 5.5e-01 \\ 
50 & 38 & 6.0e-02 & 6.0e-01 \\ 
100 & 38 & 6.1e-02 & 6.0e-01 \\ 
200 & 36 & 6.1e-02 & 5.8e-01 \\ 
500 & 34 & 6.1e-02 & 6.2e-01 \\ 
1000 & 32 & 6.1e-02 & 6.0e-01 \\ 
\end{tabular}
\end{minipage}
 \\
\begin{minipage}{0.66\textwidth}
 \includegraphics[width=\textwidth]{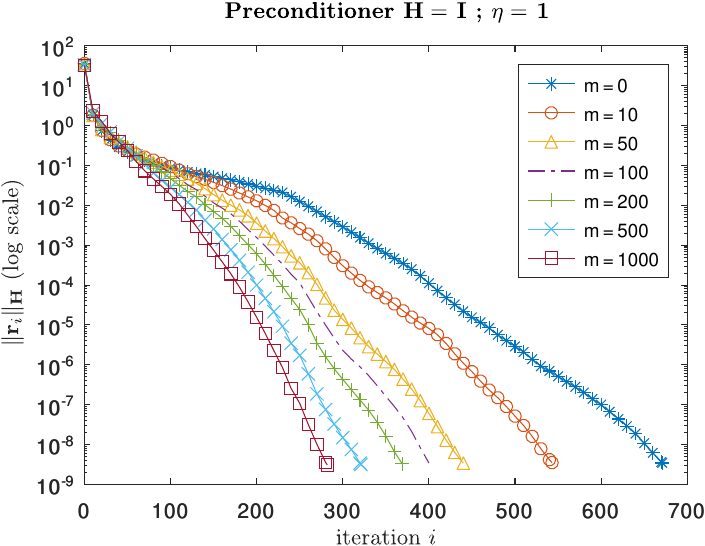}
\end{minipage}
\begin{minipage}{0.3\textwidth}
\begin{tabular}{rrcc}
m & iter &  $\theta_{th}$ & $\theta_{exp}$  \\ 
0 & 671 & 7.1e-05 & 1.7e-02 \\ 
10 & 543 & 9.2e-05 & 3.0e-02 \\ 
50 & 440 & 9.8e-05 & 4.2e-02 \\ 
100 & 400 & 9.9e-05 & 4.8e-02 \\ 
200 & 369 & 1.0e-04 & 5.2e-02 \\
500 & 321 & 1.0e-04 & 6.8e-02 \\ 
1000 & 282 & 1.0e-04 & 8.0e-02 \\ 
\end{tabular}
\end{minipage}
 \caption{Convergence curves when $\eta = 1$ in Section~\ref{sec:CDR} with varying preconditioner and rank of deflation space.}
\label{fig:eta1}
\end{figure}

\paragraph{Results for $\eta = 100$}

Next we change the value of $\eta$ to $100$. The global matrix is now $\bA = \bM + \bN$ with $\bN = 100 \tilde \bN$,
and the problem is much more nonsymmetric. The eigenvalues arising from~\eqref{eq:gevpNM} get multiplied by $100$ which has the effect of seriously deteriorating the bounds for $\theta_{th}$. Figure~\ref{fig:eta100} shows the convergence curves in this case. 
Again, we confirm that applying a preconditioner tailored for the symmetric part is a good idea 
(with 341 or 352 iterations instead of 1276). 
In the non-preconditioned case, it occurs that the non-deflated problem converges the fastest. This is surprising but does not contradict the theory. Figure~\ref{fig:zoomprecIeta100} shows the first 1000 residuals in this case for m$=0$ and m$=100$. It can be observed that deflation initially accelerates convergence, particularly where it is slowest. However, after roughly 500 iterations, the non-deflated algorithm is faster. For $\bH = \HDD$ or $\Minv$, convergence improves when more vectors are added to the deflation space. With $\bH = \Minv$, deflating 10 vectors reduces the iteration count from 
341 to 331 and deflating 100 vectors reduces it further to 259. With $\bH = \HDD $, deflating 10 vectors reduces the iteration count from 352 to 343 and deflating 100 vectors reduces it further to 275.

\begin{figure}
\begin{minipage}{0.66\textwidth}
\includegraphics[width=\textwidth]{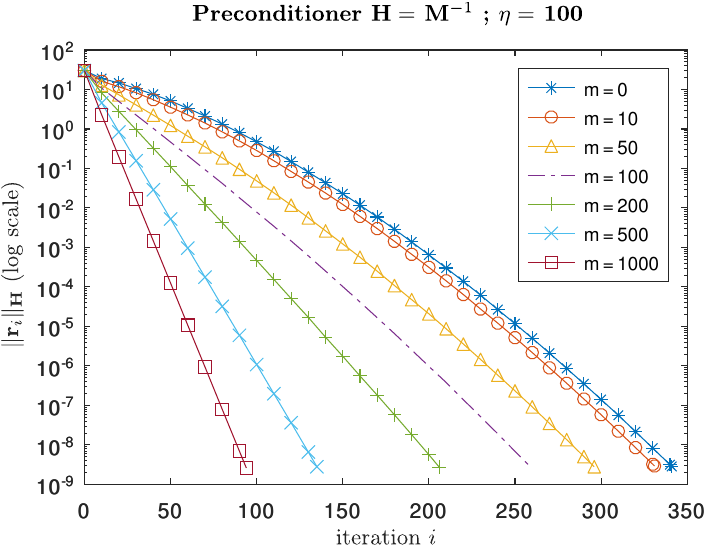}
\end{minipage}
\begin{minipage}{0.3\textwidth}
\begin{tabular}{rrcc}
m & iter &  $\theta_{th}$ & $\theta_{exp}$  \\ 
0 & 341 & 2.4e-04 & 3.1-03 \\ 
10 & 331 & 9.8e-04 & 4.5e-02 \\ 
50 & 296 & 3.2e-03 & 8.4e-02 \\ 
100 & 259 & 5.9e-03 & 1.0e-01\\ 
200 & 206 & 1.1e-02 & 1.4e-01\\ 
500 & 135 & 2.8e-02 & 2.1e-01\\ 
1000 & 94 & 5.8e-02 & 2.9e-01 \\ 
\end{tabular}
\end{minipage}
\\
\begin{minipage}{0.66\textwidth}
\includegraphics[width=\textwidth]{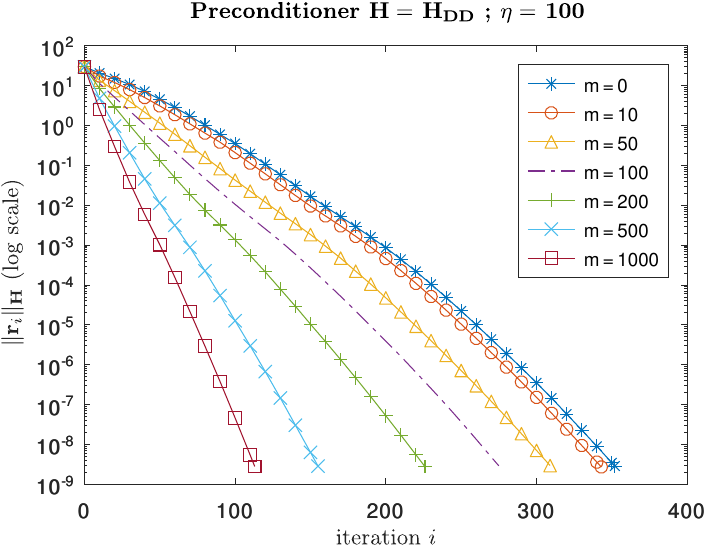}
\end{minipage}
\begin{minipage}{0.3\textwidth}
\begin{tabular}{rrcc}
m & iter &  $\theta_{th}$ & $\theta_{exp}$  \\ 
0 & 352 & 1.5e-05 & 7.55e-03 \\ 
10 & 343 & 6.0e-05 & 1.8e-02 \\ 
50 & 309 & 2.0e-04 & 4.4e-02 \\ 
100 & 275 & 3.6e-04 & 6.7e-02 \\ 
200 & 226 & 7.0e-04 & 1.0e-01 \\  
500 & 155 & 1.7e-03 & 1.9e-01 \\ 
1000 & 113 & 3.5e-03 & 2.9e-01 \\ 
\end{tabular}
\end{minipage}
\\
\begin{minipage}{0.66\textwidth}
\includegraphics[width=\textwidth]{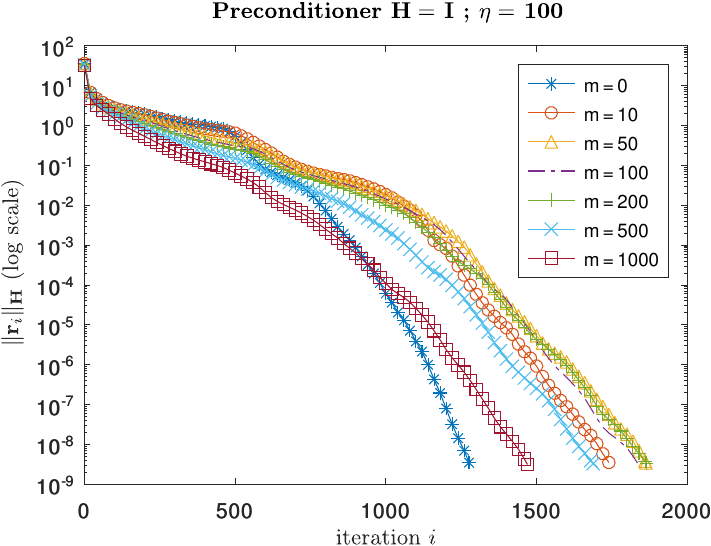}
\end{minipage}
\begin{minipage}{0.3\textwidth}
\begin{tabular}{rrcc}
m & iter &  $\theta_{th}$ & $\theta_{exp}$  \\ 
0 & 1276 & 2.4e-08 & 4.4e-03 \\ 
10 & 1740 & 9.9e-08 & 3.9e-03 \\ 
50 & 1863 & 3.2e-07 & 4.0e-03 \\ 
100 & 1835 & 6.0e-07 & 4.1e-03 \\ 
200 & 1865 & 1.2e-06 & 5.94e-03 \\ 
500 & 1688 & 2.8e-06 & 7.3e-03 \\ 
1000 & 1470 & 5.8e-06 & 9.2e-03 \\ 
\end{tabular}
\end{minipage}
\caption{Convergence curves when $\eta = 100$ in Section~\ref{sec:CDR} with varying preconditioner and rank of deflation space.}
\label{fig:eta100}
\end{figure}

\begin{figure}
\centering
\includegraphics[width=0.5\textwidth]{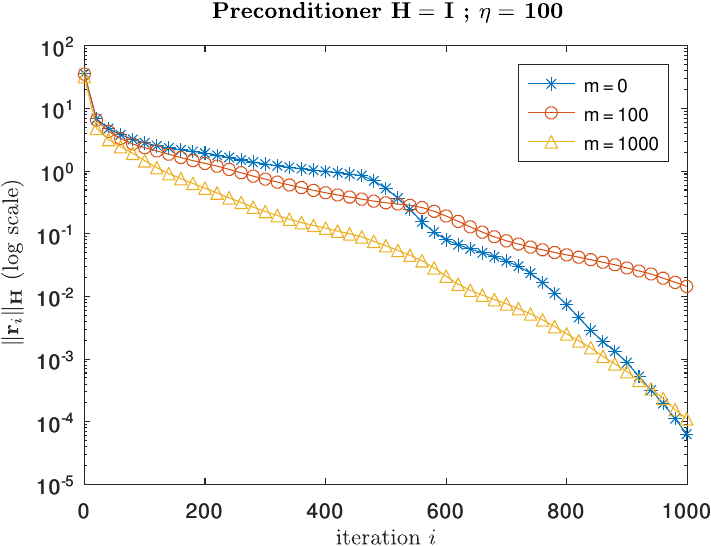}
\caption{Zoom on the first 1000 iterations in the case $\bH = \matid$ and $\eta = 100$. (Section~\ref{sec:CDR}).}
\label{fig:zoomprecIeta100}
\end{figure}

\paragraph{Comparison with the Elman bound}
A very well known bound for GMRES is the so-called  \textit{Elman bound} 
\cite{EES.83,elman1982iterative}. Its immediate generalization to GMRES preconditioned by $\bH$ and weighted by $\bW$ states that 
$
\frac{ \|\br_{i} \|_\bW^2}{\|\br_{i-1} \|_\bW^2} \leq  1 - \theta_\text{El}(\bA, \bH, \bW)
$
where
\[
\theta_\text{El}(\bA, \bH, \bW) = \left(\frac{\mathrm{d}(0,W_\bW(\bA \bH))}{\|\bA\bH\|_{\bW}}\right)^2.
\]
The numerator is the distance between $0$ and the $\bW$-field of value of the preconditioned operator.
When $\bW =\bH$ for some hpd matrix $\bH$, it can be shown that 
\[
\theta_\text{El}(\bA, \bH, \bH) = \frac{\lambda_{\min}(\bH \bM)^2}{\lambda_{\max}(\bA^* \bH \bA \bH)},
\]
Where we can use the $\lambda_{\max}$ notation since the eigenvalues are real and positive.
In Table~\ref{tab:Elman}, for each preconditioner and for $\eta \in \{1 , 100\}$ we compare $\theta_{th}$ and $\theta_{exp}$ to $\theta_\text{El}$. For the very particular case where $\bW = \bH = \bM^{-1}$, 
it can be proved that both bounds are identical.
For the other cases, it can be observed that our bound is lower than the Elman bound.

\begin{table}
\centering
\begin{tabular}{c|r|ccc}
 & & $\theta_{exp}$ & $\theta_{th}$ & $\theta_\text{El}$  \\
\hline
\multirow{2}{*}{$\bH = \Minv$} &$\eta = 1$ & 7.956e-01 & 7.059e-01 & 7.059e-01 \\
&$  \eta = 100$ & 3.144e-03 & 2.399e-04 & 2.399e-04 \\
\hline
\multirow{2}{*}{$\bH = \HDD$}& $\eta = 1$ &   5.314e-01 & 4.346e-02 &  2.399e-04 \\
& $  \eta = 100$ & 7.453e-03 & 1.477e-05 & 9.321e-06 \\
\hline
\multirow{2}{*}{$\bH = \matid$}& $\eta = 1$ &  1.658e-02 & 7.133e-05 & 1.021e-08 \\
& $  \eta = 100$ & 
4.447e-03 & 2.424e-08 & 7.774e-09
\end{tabular}
\caption{{For the convection-diffusion-reaction problem in Section~\ref{sec:CDR}  without deflation, comparison between $\theta_{exp} = \min_i (1-{\|\br_{i} \|_\bW^2}/{\|\br_{i-1} \|_\bW^2}$), $\theta_{th}$ the bound from this paper (also the bound in \cite{spillane2023hermitian}) and $\theta_{\text{El}}$, the Elman bound. Larger $\theta$ means faster convergence. 
\label{tab:Elman}}}
\end{table}

\paragraph{Influence of the mesh.}

We consider the case where $\eta = 100$ and $\bH = \HDD$ with  varying mesh size. The stopping criterion is
as before, $\| \br_i \|_\bH/ \| \br_0 \|_\bH = \| \br_i \|_\bH/ \| \rhs \|_\bH < 10^{-10} $. The three considered meshes are the $32158$ vertex mesh that was used in the tests above as well as the two less refined meshes represented in Figure~\ref{fig:meshnloc11} (2373 and 8643 vertices). After elimination of the degrees of freedom on the boundary, the resulting linear systems have respectively 31502, 8307 and 2197 dofs. The problem is solved by WPD-GMRES without deflation ($\text{m} = 0$ and with deflation of $\text{m} = 100$ vectors). The results are presented in Table~\ref{tab:hvaries}. We notice that the iteration counts increase weakly with the mesh size.
For example, from 236 for 2,197 dofs to 275 for 32,502 dofs in the case where 100 vectors are deflated. 
We also report on the quantities that appear in the convergence bounds: $\kappa(\HDD \bM)$, $| \lambda_{1} | = \rho(\bM^{-1}\bN)$ and $| \lambda_{101} |$. We know from the theory in \cite{spillane2013abstract} that $\kappa(\HDD \bM)$ is bounded independently of the mesh size and we indeed notice that it does not depend very much on the mesh. In \cite[Section 5]{spillane2023hermitian}, it was proved that for this particular PDE,  $| \lambda_{1} | = \rho(\bM^{-1}\bN)$ is also bounded independently of the mesh size by
\begin{equation}
\label{eq:boundrho}
\rho(\bM(\bA)^{-1} \bN(\bA)) \leq \frac{1}{2} \frac{\|\mathbf{a}\|_{L^\infty(\Omega)}}{\sqrt{\inf (\nu) \inf (c_0 + \frac{1}{2} \operatorname{div} (\mathbf a))}} = 3.23 \eta = 323. 
\end{equation}
The bound is satisfied here and $ \rho(\bM^{-1}\bN)$ does not depend on the mesh. For $| \lambda_{101} |$, we have no theoretical results (except of course that $| \lambda_{101} | \leq \rho(\bM(\bA)^{-1} \bN(\bA))$) and we observe a small increase when the number of dofs increases.   

\begin{table}
\centering
\begin{tabular}{r|cccccccc}
$\#$ dofs & \multicolumn{2}{c}{Iter} & $\kappa(\HDD \bM)$  &  $| \lambda_{1} | = \rho(\bM^{-1}\bN)$ &  $| \lambda_{101} |$ \\  
\hline
          &          $\text{m} = 0$                    &          $\text{m} = 100$                      & & & \\
2,197 & 316 & 236 & 14.4 & 64.4 & 11.5 \\ 
8,307 & 343 & 267 & 14.2 & 64.5 & 12.7 \\  
32,502 & 352 & 275 &13.0& 64.6 & 13.0 \\   
\end{tabular}
\caption{For $\eta = 100$ and $\bH = \HDD$ in Section~\ref{sec:CDR} , three mesh sizes are considered.
\label{tab:hvaries}}
\end{table}

\paragraph{Comparison with left preconditioned and deflated GMRES.} As a final test we examine whether GMRES with the same preconditioner (applied on the left) and deflation operator but without the change of inner product exhibits the same convergence behavior as WPD-GMRES (\textit{i.e.}, preconditioned and deflated GMRES in the $\bH$-inner product). To this end we solve the same problem
twice: once with WPD-GMRES and once with preconditioned and deflated GMRES. In both cases the stopping criterion is set to $\|\bH \br_i \|/ \|\bH \br_0 \| = \|\bH \br_i \|/ \|\bH \rhs \| < 10^{-10} $ where $\| \cdot \|$ is the Euclidean norm. As an illustration, we choose the problem with 8,307 dofs and set $\bH = \HDD$. Problems with ($\eta \in \{0.1, 1, 10, 100\}$) are solved without deflation and with deflation of $100$ vectors. The iteration counts can be found in Table~\ref{tab:vsGMRES}. 
The fact that the number of iterations is nearly identical in the weighted and unweighted cases is remarkable. The iteration count for the unweighted method is always the smallest. This is due to the fact that the chosen stopping criterion is precisely the norm that is minimized by the unweighted method. In Figure~\ref{fig:vsGMRES} we plot the history of residual $\|\bH \br_i \|/ \|\bH \br_0 \| $ for $\eta = 100$. Again, we observe a strong similarity and the minimization property of GMRES ensures that in this norm, the residual for $\bW = \matid$ will always be below the residual for $\bW = \bH$.  

\begin{table}
\centering
\begin{tabular}{r|rr|rr}
$\eta$ & \multicolumn{2}{c}{ $\text{m} = 0$  } &        \multicolumn{2}{c}{ $\text{m} = 100$} \\ 
          & $\bW = \matid$  & \bW = \bH    & $\bW = \matid$  & \bW = \bH \\ 
\hline
0.1 & 36 & 37 & 33 & 33 \\
1 & 40 & 40 & 33 & 34 \\
10 & 89 & 90 & 54 & 54 \\
100 & 327 & 329 & 253 & 255 \\
\end{tabular}
\caption{Iteration counts for weighted and unweighted algorithms. Both algorithms use the stopping criterion coming from the unweighted algorithm.
\label{tab:vsGMRES}}
\end{table}

\begin{figure}
\centering
\includegraphics[width = 0.6 \textwidth]{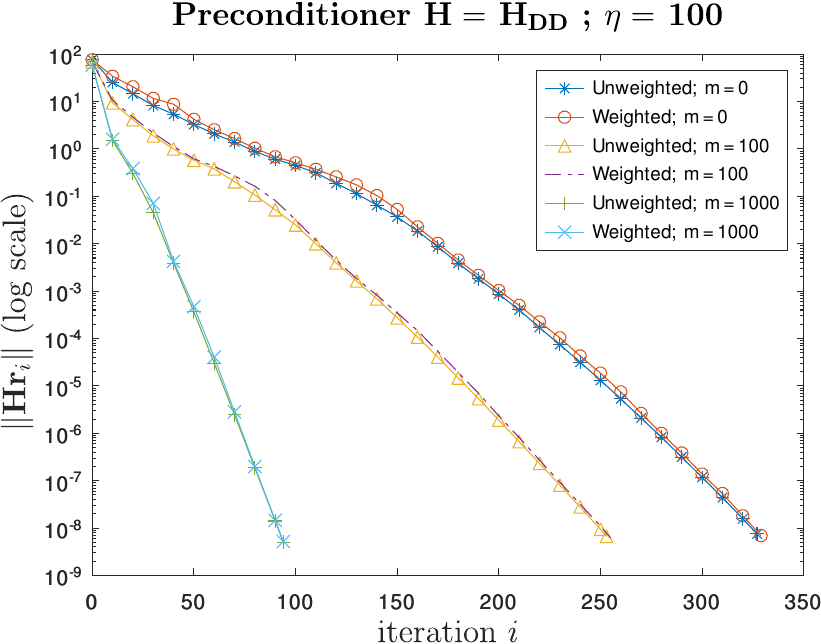}
\caption{Comparison of weighted and unweighted algorithms, without deflation and with deflation of $100$ vectors. The chosen norm is the one that is minimized by the unweighted algorithm.
\label{fig:vsGMRES}}
\end{figure}

\section{Conclusions}
We present a very general convergence bound for preconditioned GMRES when any inner product is used,
and when it is deflated. We consider bounds for several generic cases for the deflation space.
These bounds inspired us to produce an effective deflation space, namely, the 
eigenvectors of the generalized eigenvalue problem 
$\bN \bz = \lambda \bM \bz$, where $\bM$ is the Hermitian part of $\bA$, which is
assumed to be positive definite, and $\bN$ is the skew-Hermitian part of $\bA$.
Only eigenvectors corresponding to eigenvalues with moduli above a given threshold are selected.
Numerical experiments illustrate the potential for these ideas. 
In some cases, deflating is a key to convergence when GMRES without deflation fails to converge.
In other cases,
deflating indeed reduces the
number of iterations, and so do the preconditioners combined with the deflation.
On the other hand, while our theory applies to GMRES in the preconditioner inner product, our experiments show that the choice of inner product is not as important as the deflation space.

\section*{Acknowledgements} We thank the referees for 
making useful comments and suggestions, which helped improved our presentation.

\bibliographystyle{abbrv}
\bibliography{NonSymOrthodir}

\end{document}